\providecommand{\U}[1]{\protect \rule{.1in}{.1in}}
\newtheorem{theorem}{Theorem}[section]
\newtheorem{lemma}[theorem]{Lemma}
\newtheorem{remark}[theorem]{{Remark}}
\newenvironment{proof}[1][Proof]{\noindent \textbf{#1.} }{\  \rule{0.5em}{0.5em}}
\begin{document}
\title{Maximum principle for discrete-time robust stochastic optimal control problem }
\author{Wei He \thanks{Research Center for Mathematics and Interdisciplinary Sciences, Frontiers Science Center for Nonlinear Expectations (Ministry of Education), Shandong University, Qingdao 266237, Shandong, China. hew@sdu.edu.cn. Research supported by China Postdoctoral Science Foundation (2024M761781), the Natural Science Foundation of Shandong Province (ZR2024QA186), the Fundamental Research Funds for the Central Universities.}
}
\date{}
\maketitle
\begin{abstract}
This paper firstly presents the necessary and sufficient conditions for a kind of discrete-time robust stochastic optimal control problem with convex control domains. As it is an ``inf sup problem'', the classical variational method is invalid. We obtain the variational inequality with a common reference probability by systematically using weak convergence approach and the minimax theorem. Moreover, a discrete-time robust investment problem is also studied where the explicit optimal control is given.
\end{abstract}

\textbf{Key words}: Stochastic maximum principle, Discrete-time system, Robust control, Investment problem

\textbf{Mathematics Subject Classification}: 93C55, 93E20, 60H30
\section{Introduction}
It is well known that the stochastic maximum principle (SMP in short) is one of the major tools to investigate stochastic optimal control problems which was initially studied by Kushner \cite{K1,K}. The basic idea is to determine the necessary conditions that the optimal control must satisfy. This can be accomplished by solving a forward-backward stochastic system and  minimizing the Hamiltonian function. Since the pioneering works, different versions of the SMP have been developed, adapted to different framework, and relevant results can be found in Bensoussan \cite{BA}, Bismut \cite{B}, Peng \cite{P}, Cadenillas and Karatzas \cite{C}, Yong and Zhou \cite{YZ} and so on.

However, all of the results mentioned above are focused on continuous-time systems. It is important to recognize that discrete-time models are prevalent in nearly all branches of the natural sciences \cite{A, BS}. For example, in engineering applications based on digital hardware, data are available only in discrete time. And motivated by gambling and economic phenomena, the statistical decision theory has received considerable attention in which the state space and action space tend to be finite or countable. What's more, the discrete system can sometimes be looked upon as an approximation to a continuous time system, see Beissner et al. \cite{BL}. Thus, more and more researchers have been devoting their efforts to the topic of  discrete-time optimal control problem. We could refer the interested readers to \cite{H} and \cite{PC} for the discrete-time deterministic maximum principle. As for the stochastic case, the necessary and sufficient conditions for the optimal control were derived in \cite{L,W} and Dong et al. \cite{D} extended the corresponding results to mean-field type discrete-time system. More information about the discrete-time SMP can be found in \cite{HJL, J, S}. There are also several works concerning discrete-time linear-quadratic (LQ) problems. For example, \cite{E} presented the solvability condition for the mean-field LQ problem with finite horizon, and the infinite horizon case was investigated in \cite{N}. Besides, readers can be referred to \cite{BD, R, PW,  Z} for more details.

In control theory, it is commonly assumed that all parameters and the distributions are perfectly known. However, in reality, due to the statistical estimation issues, there always exists ambiguity about the underlying models. One of the best-known way to deal with this ambiguity is the multiple-priors method where the controller selects the optimal policy with respect to the most adverse case (see \cite{BG, GS}). For instance, in Hu and Wang \cite{HW}, $\Gamma$ is a locally compact Polish space which plays the role of parameter space and the random elements $\gamma \in \Gamma$ represents different environment conditions. They assumed that the true law of $\gamma$ is unknown to the controller but belongs to a family of probability distributions $\Lambda$.
Under this model uncertainty setup, they discussed a stochastic recursive optimal robust control problem through SMP approach where the coefficients of the controlled FBSDEs depend on $\gamma \in \Gamma$, i.e.
$$
\left\{\begin{array}{l}
x_\gamma(t)=x_0+\int_0^t b_\gamma\left(s, x_\gamma(s), u(s)\right) d s+\int_0^t \sigma_\gamma\left(s, x_\gamma(s), u(s)\right) d B(s), \\
y_\gamma(t)=\varphi_\gamma\left(x_\gamma(T)\right)+\int_t^T f_\gamma\left(s, x_\gamma(s), y_\gamma(s), z_\gamma(s), u(s)\right) d s-\int_t^T z_\gamma(s) d B(s),
\end{array}\right.
$$
and the cost functional is defined by $J(u)=\sup\limits_{\lambda\in\Lambda} \int_{\Gamma} {y}_{\gamma}(0) \lambda(d\gamma)$. Inspired by this, He et al. \cite{HL} focused on the  maximum principle for a stochastic mean-field type robust control problem under a similar setup and applied the results to study a mean-variance portfolio selection problem with model uncertainty. In order to deal with the recursive optimal control problem with ambiguous volatility, Hu and Ji \cite{HJ} obtained the SMP in the $G$-expectation framework introduced by Peng  \cite{Penga, Pengb, Pengd}. Also noteworthy, Bielecki et al. \cite{BC} firstly developed an adaptive robust control paradigm incorporating the idea of online learning. For other variations of robust control problems, we could refer to \cite{G, HS,  M} and references therein.

In this paper, we aim to obtain the maximum principle for the robust control problem in a discrete-time framework with convex control domain:
\begin{align*}
\left\{\begin{array}{l}
x_{\gamma}(k+1)=b_{\gamma}\left(k, x_{\gamma}(k),  u(k)\right)+\sigma_{\gamma}\left(k, x_{\gamma}(k), u(k)\right)B(k+1) \\
x_{\gamma}(0)=x_{0} \in \mathbb{R}^{n},
\end{array}\right.
\end{align*}
which can be seen as the discretization of a special case in \cite{HW} or the robustness of the problem in \cite{W}. Compared with \cite{HW}, the major obstacle is the lack of It\^{o}'s formula in the discrete-time case. Moreover, unlike the It\^{o}'s integral, the product terms involving the noise like $u(k)B(k+1)$ do not admits zero-mean property. All these will cause trouble to the estimates related to the state equation and adjoint equation. Fortunately, making full use of backward induction, the solution to the adjoint equation can be written out explicitly which together with the independence of $B$ gives the required estimates. The deduction of the duality relation also relies heavily on this explicit solution. In \cite{W} and \cite{D}, to ensure the Hamiltonian system well-defined, they assumed the integrability of $B$ and $u$ depend on the time horizon $N$,
while in the present paper we remove this restriction using a similar argument as in \cite{HJL}. As the problem we focus on is indeed an ``inf sup problem'',  the classical variational method used in \cite{W} and \cite{D}  cannot be directly applied here because of the  subadditivity of the supremum. We draw support from weak convergence approach and the minimax theorem to derive the variational inequality with a common reference probability for any admissible control, the proof of which relies heavily on the compactness of the measure set $\Lambda$.

Our main contributions can be summarized as follows. Firstly, the optimal robust control problem for discrete-time system is formulated. Secondly, based on the variational inequality obtained, the final necessary condition for optimality in terms of the Hamiltonian is derived by using the duality technique together with Fubini theorem. Besides, under additional assumptions, the SMP is also a sufficient condition.  Finally, we apply the SMP to solve a  robust investment problem in which the assets are treated discontinuously in the market and the optimal portfolio is given in an explicit form. In order to get the worst case parameter $\theta^*$, we first let it be underdetermined and then explore through classification discussion.

We shall organize the present paper as follows. We present the basic settings and give some estimates in section 2. In section 3, we establish the necessary and sufficient conditions for the discrete-time optimal control problem under model uncertainty. The last section is devoted to the study of a discrete-time robust investment problem.
\subsubsection*{Notation.}
In this paper, for a given set of parameters $\alpha$, $C(\alpha)$ will denote a positive constant only depending on these parameters and may change from line to line.

Let $N>0$ be a fixed integer, $\mathcal{T}=\{0,1, \ldots, N-1 \}$, $\mathcal{T}^{\prime}=\{1, \ldots, N \}$ and $\mathcal{T}^{\prime\prime}=\{0,1, \ldots, N \}$. On a complete probability space $\left(\Omega, \mathcal{F},P\right)$, we define a sequence of independent random variables $B=\left\{B(k), k \in \mathcal{T}^{\prime}\right\}$ such that $B(k)$ takes value in $\mathbb{R}^d$ for any $k \in \mathcal{T}^{\prime}$ and ${\mathbb{E}}\left[\left|B(k)\right|^{p}\right] \leq C(p)<\infty$ for some $p \geq 2$. Let $\mathcal{F}_{k}  \subset \mathcal{F}$ be the $\sigma$-field generated by $\{B(i), 1\leq i \leq k\}$, i.e., $\mathcal{F}_{0}=\{\emptyset,\Omega\}$ and $\mathcal{F}_k=\sigma\left\{ B(1), \ldots, B(k)\right\}$ for $k=1,\cdots,N$.

Assume $U_0,U_1,\cdots, U_{N-1}$ is a sequence of nonempty convex subsets of $\mathbb{R}^m$. We call $u=\{u(k),k \in \mathcal{T}\}$ an admissible control if $u(k)$ is an $\mathcal{F}_k$-measurable random variable taking value in $U_k$ for each $k \in \mathcal{T}$ and $\mathbb{E} [\sum_{k=0}^{N-1}\left|u(k)\right|^q]<\infty$ with some $q>2$. The set of admissible controls is denoted by $\mathcal{U}$.

\section{Problem formulation}
Estimating market parameters in financial markets is often challenging, and models with significant errors in parameters may be impractical. To address this issue, one possible way is to use the robust model, i.e. to minimize the cost under the worst scenario. This paper contributes to the maximum principle for the robust stochastic control problems in a discrete time framework. Let $(\Gamma,\mathcal{B}(\Gamma))$ be a locally compact Polish space equipped with the distance $\widetilde{d}$. To construct the model uncertainty setup, we use the $\Gamma$-valued random elements $\gamma$ to represent different market conditions and regard $\Lambda$ as the set of all possible probability distributions of $\gamma$. If $\Gamma$ is a singleton set, the problem coincides with the classical one studied in \cite{L,W}.

Our goal is to find optimal control $u^{*} \in \mathcal{U}$, which minimize the following cost functional:
\begin{align}\label{A1}
J(u)=\sup _{\lambda \in \Lambda} \int_{\Gamma}{\mathbb{E}}\left[\sum_{k=0}^{N-1} f_{\gamma}\left(k, x_{\gamma}(k),  u(k)\right)+\phi_{\gamma}\left(x_{\gamma}(N)\right)\right]\lambda(d\gamma),
\end{align}
where $\{x_{\gamma}(k),k \in \mathcal{T}^{\prime\prime}\}$ evolves as follows:
\begin{align}\label{A2}
\left\{\begin{array}{l}
x_{\gamma}(k+1)=b_{\gamma}\left(k, x_{\gamma}(k),  u(k)\right)+\sigma_{\gamma}\left(k, x_{\gamma}(k), u(k)\right)B(k+1) \\
x_{\gamma}(0)=x_{0} \in \mathbb{R}^{n}.
\end{array}\right.
\end{align}
Here, $b_{\gamma}(k, \cdot,  \cdot):\mathbb{R}^n\times U_k\rightarrow
\mathbb{R}^n$, $\sigma_{\gamma}(k, \cdot,  \cdot):\mathbb{R}^n\times U_k\rightarrow
\mathbb{R}^{n\times d}$, $f_{\gamma}(k, \cdot,  \cdot):\mathbb{R}^n\times U_k\rightarrow
\mathbb{R}$, $\phi_{\gamma}:\mathbb{R}^n\rightarrow
\mathbb{R}$ are given continuous functions. We proceed to  introduce some  basic assumptions which will be in force throughout the paper.
\begin{description}
\item[(H1)] For each $k\in \mathcal{T}$ and $\gamma \in \Gamma$, there exists a positive constant $L$ such that $|b_{\gamma}(k,0,0)|+|\sigma_{\gamma}(k,0,0)|\leq L$. In addition, $b_{\gamma}(k,x,u),\sigma_{\gamma}(k,x,u)$ are  continuously differentiable in $(x,u)$ and the derivatives are bounded by $L$
\item[(H2)] For each $k\in \mathcal{T}$ and $\gamma \in \Gamma$, $f_{\gamma}(k,x,u),\phi_{\gamma}(x)$ are continuously differentiable in $(x,u)$ with derivatives bounded by $L(1+|x|+|u|)$, besides $|f_{\gamma}(k,0,0)|\leq L$.
\item[(H3)] There exists a modulus of continuity $\psi:[0,\infty)\rightarrow[0,\infty)$, i.e. a continuous, semiadditive, nondecreasing real function satisfying $\psi(0)=0$,  such that,
   \begin{align*}
   |\chi_{\gamma}(k,x,u)-\chi_{\gamma}(k,x^{\prime},u^{\prime})|\leq \psi(|x-x^{\prime}|+|u-u^{\prime}|),
    \end{align*}
for any $k \in \mathcal{T}, x,x^{\prime} \in \mathbb{R}^n, u,u^{\prime} \in U_k, \gamma \in \Gamma$, where $\chi_{\gamma}$ represents all the derivatives in (H1) and (H2).
\item[(H4)]  For each $ R > 0$, there exists a modulus of continuity $\psi_R:[0,\infty)\rightarrow[0,\infty)$  such that
   \begin{align*}
   |\chi_{\gamma}(k,x,u)-\chi_{\gamma^{\prime}}(k,x,u)|\leq \psi_R(\widetilde{d}(\gamma,\gamma^{\prime})),\ \forall k \in \mathcal{T}, \ |x|,  |u|\leq R,\ \gamma,\gamma^{\prime}\in\Gamma,
    \end{align*}
where $\chi_{\gamma}$ can be $b_{\gamma}(k,x,u),\sigma_{\gamma}(k,x,u),f_{\gamma}(k,x,u),\phi_{\gamma}(x)$ and their derivatives with respect to $(x,u)$.
\item[(H5)] $\Lambda$ is a
weakly compact and convex set of probability measures on $(\Gamma,\mathcal{B}(\Gamma))$.
\end{description}

The following two lemmas reveal that the control system \eqref{A1}-\eqref{A2} is well-defined under above assumptions.
\begin{lemma}\label{myw201}
Suppose (H1) hold. Then for any $u \in \mathcal{U}$, $p\in[2,q]$, the equation \eqref{A2} admits a unique solution $x=\left\{x(k), k \in \mathcal{T}^{\prime\prime}\right\}$  satisfying
\begin{align}\label{A3}
{\mathbb{E}} \left[\sum_{k =0}^N\left|x_{\gamma}(k)\right|^{p}\right] \leq C(L, p,d, N){\mathbb{E}} \left[\left|x_{0}\right|^{p}+\sum_{k=0}^{N-1}\Big(\left|u(k)\right|^{p}+\left|b_{\gamma}\left(k, 0,  0\right)\right|^{p}+\sum_{i=1}^{d}\left|\sigma^i_{\gamma}\left(k, 0,  0\right)\right|^{p}\Big)\right].
\end{align}
Moreover, if we further assume (H4), then $\gamma\rightarrow x_{\gamma}(k)$ is continuous in the following sense
\begin{align}\label{A4}
\lim\limits_{\epsilon\rightarrow 0}\sup\limits_{\widetilde{d}(\gamma,\gamma^{\prime})\leq \epsilon}\mathbb{E}\left[\sum_{k =0}^N|x_{\gamma}(k)-x_{\gamma^{\prime}}(k)|^2\right]=0.
\end{align}
\end{lemma}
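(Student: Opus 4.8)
The plan is to handle the two assertions separately, since existence together with the bound \eqref{A3} is a forward recursion estimate, whereas the continuity \eqref{A4} hinges on a uniform integrability argument. For existence and uniqueness I would simply observe that \eqref{A2} is an explicit one-step recursion: given an $\mathcal{F}_k$-measurable $x_\gamma(k)$, the right-hand side defines $x_\gamma(k+1)$ unambiguously, and since $u(k)$ is $\mathcal{F}_k$-measurable while $B(k+1)$ is $\mathcal{F}_{k+1}$-measurable, induction yields a unique adapted solution with $x_\gamma(k)$ being $\mathcal{F}_k$-measurable for every $k$. The substance is therefore the moment bound.

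For \eqref{A3} I would argue by induction on $k$ with a discrete Gronwall step. Under (H1) the bounded derivatives give the linear growth bounds $|b_\gamma(k,x,u)|\leq|b_\gamma(k,0,0)|+L(|x|+|u|)$ and likewise for $\sigma_\gamma$. Raising $|x_\gamma(k+1)|\leq|b_\gamma(k,x_\gamma(k),u(k))|+|\sigma_\gamma(k,x_\gamma(k),u(k))|\,|B(k+1)|$ to the $p$-th power and taking expectations, the decisive point is that $B(k+1)$ is independent of $\mathcal{F}_k$ while $x_\gamma(k),u(k)$ are $\mathcal{F}_k$-measurable; hence the noise term factorizes as $\mathbb{E}[|\sigma_\gamma(k,x_\gamma(k),u(k))|^p]\,\mathbb{E}[|B(k+1)|^p]$ up to a constant $C(d)$, and the moment assumption $\mathbb{E}[|B(k+1)|^p]\leq C(p)$ absorbs it. This gives the recursion
\begin{align*}
\mathbb{E}[|x_\gamma(k+1)|^p]\leq C(L,p,d)\Big(\mathbb{E}[|x_\gamma(k)|^p]+\mathbb{E}[|u(k)|^p]+\mathbb{E}[|b_\gamma(k,0,0)|^p]+\sum_{i=1}^d\mathbb{E}[|\sigma_\gamma^i(k,0,0)|^p]\Big),
\end{align*}
and iterating over the finite horizon $0,\dots,N-1$ and summing produces \eqref{A3} with a constant depending on $N$. (That the moments $\mathbb{E}[|u(k)|^p]$ are finite for $p\in[2,q]$ follows from $u\in\mathcal{U}$ via Hölder.)

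For the continuity \eqref{A4}, set $\delta_k=x_\gamma(k)-x_{\gamma'}(k)$, so $\delta_0=0$. Subtracting the two recursions and using the independence of $B(k+1)$ as above yields $\mathbb{E}[|\delta_{k+1}|^2]\leq C(d)\big(\mathbb{E}[|\Delta b_k|^2]+\mathbb{E}[|\Delta\sigma_k|^2]\big)$, where $\Delta b_k=b_\gamma(k,x_\gamma(k),u(k))-b_{\gamma'}(k,x_{\gamma'}(k),u(k))$ and similarly for $\sigma$. I then split each increment into a parameter part $b_\gamma(k,x_\gamma(k),u(k))-b_{\gamma'}(k,x_\gamma(k),u(k))$ and a state part $b_{\gamma'}(k,x_\gamma(k),u(k))-b_{\gamma'}(k,x_{\gamma'}(k),u(k))$; the latter is bounded by $L|\delta_k|$ through (H1), while the former is the genuinely new term to be controlled by (H4). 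The main obstacle is precisely this parameter part, because (H4) only supplies the modulus $\psi_R(\widetilde{d}(\gamma,\gamma'))$ on the bounded event $A_R=\{|x_\gamma(k)|\leq R,\ |u(k)|\leq R\}$. On the complement $A_R^c$ I would instead use the linear growth bound $C(1+|x_\gamma(k)|^2+|u(k)|^2)$ and control its expectation by Hölder's inequality with exponent $p/2$ together with the Chebyshev estimate $P(A_R^c)\leq R^{-p}(\mathbb{E}[|x_\gamma(k)|^p]+\mathbb{E}[|u(k)|^p])$. Here the availability of a moment of order $p>2$ from \eqref{A3} and the admissibility $u\in\mathcal{U}$ are essential, and the fact that the bounds in \eqref{A3} are uniform in $\gamma$ (since $|b_\gamma(k,0,0)|,|\sigma_\gamma(k,0,0)|\leq L$) is what lets the tail term $\eta(R):=\sup_{k,\gamma,\gamma'}\mathbb{E}[(1+|x_\gamma(k)|^2+|u(k)|^2)\mathbf{1}_{A_R^c}]$ tend to $0$ as $R\to\infty$ uniformly in $\gamma,\gamma'$. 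Feeding these estimates back gives $\mathbb{E}[|\delta_{k+1}|^2]\leq C(d,L)\big(\mathbb{E}[|\delta_k|^2]+\psi_R(\widetilde{d}(\gamma,\gamma'))^2+\eta(R)\big)$; iterating over the finite horizon and taking $\sup_{\widetilde{d}(\gamma,\gamma')\leq\epsilon}$ leaves a bound of the form $C(d,L,N)(\psi_R(\epsilon)^2+\eta(R))$. Letting $\epsilon\to0$ annihilates the first term for each fixed $R$ (as $\psi_R(0)=0$ and $\psi_R$ is continuous), and then letting $R\to\infty$ annihilates $\eta(R)$, which establishes \eqref{A4}.
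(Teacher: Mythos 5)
Your proposal is correct and follows essentially the same route as the paper: the moment bound via the one-step recursion using (H1) and the independence of $B(k+1)$ from $\mathcal{F}_k$, and the continuity via splitting the increment into a state part (Lipschitz, from the bounded derivatives) and a parameter part handled by (H4) on a bounded set plus a tail estimate exploiting the uniform-in-$\gamma$ moments of order $q>2$. The only cosmetic difference is in the tail control, where you use Chebyshev on $P(A_R^c)$ plus H\"older while the paper bounds the indicator by $|u(k)|/R$ and applies H\"older--Young directly; both yield the same $R^{(2-q)/q}$-type decay and the same order of limits ($\epsilon\to 0$ first, then $R\to\infty$).
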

\begin{proof}
By assumption (H1) and the independence of $B$, we could get for each $k\in \mathcal{T}$
\begin{align*}
\begin{split}
&{\mathbb{E}} \left[\left|x_{\gamma}(k+1)\right|^{p}\right]\leq C( p){\mathbb{E}} \left[\left|b_{\gamma}\left(k, x_{\gamma}(k),  u(k)\right)\right|^{p}+\sum_{i=1}^{d}\left|\sigma^i_{\gamma}\left(k, x_{\gamma}(k),  u(k)\right)\right|^{p}|B^i(k+1)|^p\right]\\
&\leq C(L,p) {\mathbb{E}} \left[|x_{\gamma}(k)|^p+|u(k)|^p+\left|b_{\gamma}\left(k, 0,  0\right)\right|^{p}+\sum_{i=1}^{d}\Big(|x_{\gamma}(k)|^p+|u(k)|^p+\left|\sigma^i_{\gamma}\left(k, 0,  0\right)\right|^{p}\Big)|B^i(k+1)|^p\right]\\
&\leq C(L,p,d) {\mathbb{E}} \left[|x_{\gamma}(k)|^p+|u(k)|^p+\left|b_{\gamma}\left(k, 0,  0\right)\right|^{p}+\sum_{i=1}^{d}\left|\sigma^i_{\gamma}\left(k, 0,  0\right)\right|^{p}\right].
\end{split}
\end{align*}
Then, \eqref{A3} can be obtained by induction. For notational simplicity, set $\alpha(k)=x_{\gamma}(k)-x_{\gamma^{\prime}}(k)$. From \eqref{A2}, we have
\begin{small}
\begin{align*}
\begin{split}
\alpha(k+1)&=b_{\gamma}\left(k, x_{\gamma}(k),  u(k)\right) -b_{\gamma^{\prime}}\left(k, x_{\gamma^{\prime}}(k), u(k)\right)+\sum_{i=1}^{d} \Big(\sigma^i_{\gamma}\left(k, x_{\gamma}(k),  u(k)\right) -\sigma^i_{\gamma^{\prime}}\left(k, x_{\gamma^{\prime}}(k), u(k)\right)\Big)B^i(k+1)\\
&=\int_{0}^{1}\partial_x b_{\gamma^{\prime}}\left(k,x^{\rho}_{\gamma,\gamma^{\prime}}(k),u(k)\right)d\rho\alpha(k)+\sum_{i=1}^{d} \int_{0}^{1}\partial_x \sigma^i_{\gamma^{\prime}}\left(k,x^{\rho}_{\gamma,\gamma^{\prime}}(k),u(k)\right)d\rho\alpha(k)B^i(k+1)\\
&\ \ \ \ \ \ \ \ \ \ \ \ \ \ \ \ \ \ +{A}_{\gamma,\gamma^{\prime}}^{\gamma}(k)+\sum_{i=1}^{d}{D}_{\gamma,\gamma^{\prime}}^{\gamma,i}(k)B^i(k+1),
\end{split}
\end{align*}
\end{small}
where $x^{\rho}_{\gamma,\gamma^{\prime}}(k)=x_{\gamma^{\prime}}(k)+\rho\left(x_{\gamma}(k)-x_{\gamma^{\prime}}(k)\right)$, ${A}_{\gamma,\gamma^{\prime}}^{\gamma}(k)=b_{\gamma}\left(k, x_{\gamma}(k), u(k)\right) -b_{\gamma^{\prime}}\left(k, x_{\gamma}(k), u(k)\right)$ and ${D}_{\gamma,\gamma^{\prime}}^{\gamma,i}(k)$ is defined similarly. According to the derivation of \eqref{A3}, we deduce that
$$
\mathbb{E}\left[\sum_{k =0}^N|\alpha(k)|^2\right]\leq C(L,d,N) \mathbb{E}\bigg[\sum_{k =0}^{N-1}\Big(|{A}_{\gamma,\gamma^{\prime}}^{\gamma}(k)|^2+\sum_{i=1}^{d} |{D}_{\gamma,\gamma^{\prime}}^{\gamma,i}(k)|^2\Big)\bigg].
$$
In what follows, it remains to prove
\begin{align}\label{A5}
\lim\limits_{\epsilon\rightarrow 0}\sup\limits_{\widetilde{d}(\gamma,\gamma^{\prime})\leq \epsilon}\mathbb{E}\left[\sum_{k =0}^{N-1}\Big(|{A}_{\gamma,\gamma^{\prime}}^{\gamma}(k)|^2+\sum_{i=1}^{d} |{D}_{\gamma,\gamma^{\prime}}^{\gamma,i}(k)|^2\Big)\right]=0.
\end{align}
Assumption (H4) implies that for each $R>0$,
\[
|{A}_{\gamma,\gamma^{\prime}}^{\gamma}(k)|\leq \psi_R(\widetilde{d}(\gamma,\gamma^{\prime}))+C(L)\Big(1+|x_{\gamma}(k)|+|u(k)|\Big)\Big(I_{\{|x_{\gamma}(k)|\geq R\}}+I_{\{|u(k)|\geq R\}}\Big).
\]
Using H{\"o}lder's inequality, Young's inequality together with \eqref{A3}, we obtain
$$
\begin{aligned}
&\mathbb{E}\Big[\sum_{k =0}^{N-1}|x(k)|^2I_{\{|u(k)|\geq R\}}\Big]\leq \mathbb{E}\Big[\sum_{k =0}^{N-1}|x(k)|^{q} \Big]^{\frac{2}{q}} \mathbb{E}\Big[\sum_{k =0}^{N-1} \frac{|u(k)|}{R} \Big]^{\frac{q-2}{q}}\\
&\leq \bigg(\frac{2}{q}\mathbb{E}\Big[\sum_{k =0}^{N-1}|x(k)|^{q}\Big]+\frac{q-2}{q}\mathbb{E}\Big[\sum_{k =0}^{N-1}|u(k)|\Big]\bigg)R^{\frac{2-q}{q}}\leq C(L,N,q,d,x_{0})R^{\frac{2-q}{q}}.
\end{aligned}
$$
Applying the same argument to other terms of ${A}_{\gamma,\gamma^{\prime}}^{\gamma}(k)$ and ${D}_{\gamma,\gamma^{\prime}}^{\gamma,i}(k)$,  we conclude that
$$
\mathbb{E}\left[\sum_{k =0}^{N-1}\Big(|{A}_{\gamma,\gamma^{\prime}}^{\gamma}(k)|^2+\sum_{i=1}^{d} |{D}_{\gamma,\gamma^{\prime}}^{\gamma,i}(k)|^2\Big)\right] \leq C(L,N,q,d,x_{0})\Big(| \psi_R(\widetilde{d}(\gamma,\gamma^{\prime}))|^2+R^{\frac{2-q}{q}}\Big), \ \forall R>0.
$$
Letting $\epsilon\rightarrow 0$ and then $R\rightarrow\infty$ gives \eqref{A5}.
\end{proof}

\begin{lemma}\label{myw202}
Set $y_{\gamma}(0)={\mathbb{E}}\left[\sum_{k=0}^{N-1} f_{\gamma}\left(k, x_{\gamma}(k),  u(k)\right)+\phi_{\gamma}\left(x_{\gamma}(N)\right)\right]$ for simplicity. Under assumptions (H1), (H2) and (H4), $y_\gamma(0) \in C_b(\Gamma)$.
\end{lemma}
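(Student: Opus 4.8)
The plan is to verify the two defining properties of $C_b(\Gamma)$ separately: first the (routine) boundedness of $\gamma\mapsto y_\gamma(0)$, and then its continuity, which is the substantive part. For boundedness I would turn the derivative bounds of (H2) into growth bounds by integrating along the segment from $(0,0)$: since $|\partial f_\gamma|\le L(1+|x|+|u|)$ and $|f_\gamma(k,0,0)|\le L$, one gets $|f_\gamma(k,x,u)|\le C(L)(1+|x|^2+|u|^2)$, and likewise $|\phi_\gamma(x)|\le C(L)(1+|x|^2)$ (the base value being controlled just as $f_\gamma(k,0,0)$ is). Taking expectations and invoking the moment estimate \eqref{A3} of Lemma \ref{myw201} with $p=2$, which is uniform in $\gamma$ because $|b_\gamma(k,0,0)|,|\sigma_\gamma(k,0,0)|\le L$ by (H1) and $\mathbb{E}\sum_k|u(k)|^2<\infty$, then bounds $|y_\gamma(0)|$ by a constant independent of $\gamma$.

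For continuity I must show $y_{\gamma'}(0)\to y_\gamma(0)$ as $\widetilde{d}(\gamma,\gamma')\to 0$. I would write $y_\gamma(0)-y_{\gamma'}(0)$ as a sum over $k$ of $\mathbb{E}[f_\gamma(k,x_\gamma(k),u(k))-f_{\gamma'}(k,x_{\gamma'}(k),u(k))]$ plus the analogous terminal $\phi$-term, and split each difference through the intermediate value $f_{\gamma'}(k,x_\gamma(k),u(k))$ into a $\gamma$-variation at frozen state and a state-variation at frozen coefficient. The frozen-state term is estimated exactly as in the derivation of \eqref{A5}: on $\{|x_\gamma(k)|\le R,\ |u(k)|\le R\}$ it is at most $\psi_R(\widetilde{d}(\gamma,\gamma'))$ by (H4), while on the complement the quadratic growth of $f_\gamma$ is integrated out via Hölder's and Young's inequalities and the uniform $q$-th moments ($q>2$) from \eqref{A3}, producing a tail of order $R^{(2-q)/q}$; letting first $\gamma'\to\gamma$ and then $R\to\infty$ sends it to $0$.

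For the frozen-coefficient term I would use the $C^1$-bound of (H2) to represent it as $\int_0^1\partial_x f_{\gamma'}(k,x^{\rho},u(k))\,d\rho\,(x_\gamma(k)-x_{\gamma'}(k))$ with $x^{\rho}$ the convex combination of $x_\gamma(k)$ and $x_{\gamma'}(k)$, bound it by $C(L)(1+|x_\gamma(k)|+|x_{\gamma'}(k)|+|u(k)|)\,|x_\gamma(k)-x_{\gamma'}(k)|$, and apply the Cauchy–Schwarz inequality: the first factor has uniformly bounded $L^2$-norm by \eqref{A3}, and the second tends to $0$ by the $L^2$-continuity \eqref{A4} of Lemma \ref{myw201}. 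The terminal contribution is treated identically with $x_\gamma(N)$ in place of $x_\gamma(k)$, so every piece vanishes in the limit and continuity follows.

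I expect the frozen-state term to be the main obstacle, because (H4) only supplies a modulus of continuity in $\gamma$ that is locally uniform in the state (valid on balls of radius $R$), whereas $f_\gamma$ and $\phi_\gamma$ grow quadratically and the states are unbounded. Reconciling these forces the truncation-and-tail argument above, whose legitimacy hinges precisely on the uniform-in-$\gamma$ higher-moment bound \eqref{A3} delivering the uniform integrability needed to pass to the limit $R\to\infty$; this is the same mechanism that drives \eqref{A5} in Lemma \ref{myw201}.
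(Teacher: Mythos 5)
Your proposal is correct and follows essentially the same route as the paper: the same splitting of each difference through the intermediate value $f_{\gamma'}(k,x_\gamma(k),u(k))$ into a $\gamma$-variation at frozen state (handled by (H4) plus the truncation/tail argument with the uniform $q$-th moment bound from \eqref{A3}) and a state-variation at frozen coefficient (handled by the (H2) gradient bound, Cauchy--Schwarz, and the $L^2$-continuity \eqref{A4}), with boundedness read off from (H2) and \eqref{A3}. No gaps; your identification of the truncation-and-tail step as the crux matches exactly how the paper's proof (and its Lemma \ref{myw201}) proceeds.
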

\begin{proof}
The boundedness comes immediately from the assumption (H2) and estimate \eqref{A3}. It suffices to prove that the function $\gamma\rightarrow y_{\gamma}(0)$ is continuous. By simple calculation, we could get
\begin{align*}
\begin{split}
|y_{\gamma}(0)-y_{\gamma^{\prime}}(0)|&\leq \mathbb{E}\bigg[\sum_{k=0}^{N-1}\Big(\Big|\int_{0}^{1}\partial_x f_{\gamma^{\prime}}\left(k,x^{\rho}_{\gamma,\gamma^{\prime}}(k),u(k)\right)d\rho\alpha(k)\Big|+|F_{\gamma,\gamma^{\prime}}^{\gamma}(k)|\Big)\\
&\ \ \ +\Big|\int_{0}^{1}\partial_x \phi_{\gamma^{\prime}}\left(x^{\rho}_{\gamma,\gamma^{\prime}}(N)\right)d\rho\alpha(N)\Big|+\Big|\phi_{\gamma}\left(x_{\gamma}(N)\right)-\phi_{\gamma^{\prime}}\left(x_{\gamma}(N)\right)\Big|\bigg],
\end{split}
\end{align*}
where $F_{\gamma,\gamma^{\prime}}^{\gamma}(k)=f_{\gamma}\left(k, x_{\gamma}(k), u(k)\right) -f_{\gamma^{\prime}}\left(k, x_{\gamma}(k), u(k)\right)$. Note that from assumption (H2) we have
$$
\Big|\partial_x f_{\gamma^{\prime}}\left(k,x^{\rho}_{\gamma,\gamma^{\prime}}(k),u(k)\right)\Big|\leq C(L)\Big(1+|x_{\gamma}(k)|+|x_{\gamma^{\prime}}(k)|+|u(k)|\Big).
$$
And with the help of assumption (H4), it holds that
$$
|F_{\gamma,\gamma^{\prime}}^{\gamma}(k)|\leq \psi_R(\widetilde{d}(\gamma,\gamma^{\prime}))+ C(L)\big(1+|x_{\gamma}(k)|^2+|u(k)|^2\big)\big(I_{\{|x_{\gamma}(k)|\geq R\}}+I_{\{|u(k)|\geq R\}}\big).
$$
Thus, by a similar argument as in the proof of Lemma \ref{myw201}, we finally get
\[
|y_{\gamma}(0)-y_{\gamma^{\prime}}(0)|\leq  C(L,N,q,d,x_{0})\Big(\mathbb{E}\Big[\sum_{k =0}^{N}|\alpha(k)|^2\Big]^{\frac{1}{2}}+| \psi_R(\widetilde{d}(\gamma,\gamma^{\prime}))|+R^{\frac{2-q}{q}}\Big).
\]
The desired result follows from \eqref{A4}.
\end{proof}

In conclusion, our aim is to characterize the optimal condition for the following discrete-time robust stochastic control problem
$$
\left\{\begin{array}{cl}
\text { Minimize } & J(u) \\
\text { subject to } & u \in \mathcal{U},
\end{array}\right.
\ \ \ \ \ (\star)$$
which is in fact an ``inf sup problem''. The admissible control ${u}^*=\{u^*(k),k \in \mathcal{T}\}$ solves $(\star)$ is called an optimal control and the corresponding system state is denoted by ${x}_{\gamma}^*=\{x^*_{\gamma}(k),k \in \mathcal{T}^{\prime\prime}\}$.

\section{The maximum principle}
Take ${u}=\{u(k),k \in \mathcal{T}\}$ as an arbitrary element of $ \mathcal{U}$. Since $\{U_k\}_{k=0}^{N-1}$ are convex sets, the perturbed control $u^{\delta}:={u}^*+\delta(u-{u}^*) \in \mathcal{U}$ for any $\delta\in(0,1)$. Denote by ${x}_{\gamma}^{\delta}=\{x^{\delta}_{\gamma}(k),k \in \mathcal{T}^{\prime\prime}\}$ the trajectory associated with $u^{\delta}$. For simplicity, we introduce some short-hand notation
\[
b_{\gamma}^*(k)=b_{\gamma}(k,{x}_{\gamma}^*(k),{u}^*(k)),\  {\phi}_{\gamma}^*(N)=\phi_{\gamma}\left(x_{\gamma}^*(N)\right), \ b_{\gamma}^{\delta}(k)=b_{\gamma}(k,{x}_{\gamma}^{\delta}(k),{u}^{\delta}(k)),
\]
and the other functions and their derivatives can be denoted similarly.
\subsection{Variation along an optimal pair}
In order to derive the first-order necessary condition, we first introduce the variational equation. For each $k \in \mathcal{T}$,
\begin{small}
\begin{align}\label{A6}
\left\{\begin{array}{l}
\bar{x}_{\gamma}(k+1)=\partial_x b^*_{\gamma}(k)\bar{x}_{\gamma}(k)+\partial_u b^*_{\gamma}(k)\hat{u}(k)+\sum\limits_{i=1}^{d}\left\{\partial_x \sigma_{\gamma}^{*,i}(k) \bar{x}_{\gamma}(k)+\partial_u \sigma_{\gamma}^{*,i}(k) \hat{u}(k)\right\} B^{i}(k+1) \\
\bar{x}_{\gamma}(0)=0,
\end{array}\right.
\end{align}
\end{small}
where $\hat{u}(k)=u(k)-u^*(k)$. Due to the derivation of \eqref{A3} and boundedness of the derivatives in (H1), the variational equation \eqref{A6} admits a unique solution $\bar{x}_{\gamma}=\left\{\bar{x}_{\gamma}(k), k=0,1, \ldots, N\right\}$ such that
\begin{align}\label{A7}
{\mathbb{E}} \left[\sum_{k =0}^N\left|\bar{x}_{\gamma}(k)\right|^{p}\right] \leq C(L, p,d, N){\mathbb{E}} \left[\sum_{k=0}^{N-1}\Big(\left|u(k)\right|^{p}+\left|u^*(k)\right|^{p}\Big)\right].
\end{align}
In fact, we could write out the explicit solution to \eqref{A6}. Set
\begin{align}\label{A8}
M_{\gamma}(k)=\partial_x b^*_{\gamma}(k)+\sum\limits_{i=1}^{d}\partial_x \sigma_{\gamma}^{*,i}(k) B^{i}(k+1),\ T_{\gamma}(k)=\Big(\partial_u b^*_{\gamma}(k)+\sum\limits_{i=1}^{d}\partial_u \sigma_{\gamma}^{*,i}(k) B^{i}(k+1)\Big)\hat{u}(k).
\end{align}
Then, \eqref{A6} can be rewritten as $\bar{x}_{\gamma}(k+1)=M_{\gamma}(k)\bar{x}_{\gamma}(k)+T_{\gamma}(k)$.  Since $\bar{x}_{\gamma}(0)=0$, using induction yields for $k=1, \ldots, N$
\begin{align}\label{A9}
\bar{x}_{\gamma}(k)=T_{\gamma}(k-1)+\sum\limits_{i=0}^{k-2}\prod\limits_{j=i+1}^{k-1}M_{\gamma}(j)T_{\gamma}(i),
\end{align}
where we suppose $\sum_{i=0}^{-1}[\cdot]=0$.
\begin{lemma}\label{myw301}
Suppose that (H1)-(H3) are satisfied. Then, we have
\begin{align*}
\lim\limits_{\delta\rightarrow 0}\sup\limits_{\gamma\in\Gamma}\mathbb{E}\left[\sum_{k =0}^N\left|\frac{x^{\delta}_{\gamma}(k)-{x}_{\gamma}^*(k)}{\delta}-\bar{x}_{\gamma}(k)\right|^{2}\right]=0.
\end{align*}
\end{lemma}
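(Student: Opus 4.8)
The plan is to show that the rescaled difference $z^{\delta}_{\gamma}(k):=\delta^{-1}\big(x^{\delta}_{\gamma}(k)-x^{*}_{\gamma}(k)\big)$ converges to $\bar{x}_{\gamma}(k)$ in $L^{2}$, uniformly in $\gamma$, by deriving a closed recursion for the error $\xi^{\delta}_{\gamma}(k):=z^{\delta}_{\gamma}(k)-\bar{x}_{\gamma}(k)$ and estimating its inhomogeneous term. First I would Taylor-expand each coefficient along the segment joining $(x^{*}_{\gamma}(k),u^{*}(k))$ and $(x^{\delta}_{\gamma}(k),u^{\delta}(k))$: writing $\theta^{\rho}_{\gamma}(k)=\big(x^{*}_{\gamma}(k)+\rho\delta z^{\delta}_{\gamma}(k),\,u^{*}(k)+\rho\delta\hat{u}(k)\big)$ and using $u^{\delta}-u^{*}=\delta\hat{u}$, the fundamental theorem of calculus followed by division by $\delta$ produces a linear recursion for $z^{\delta}_{\gamma}$ whose coefficients are $\int_{0}^{1}\partial_x b_{\gamma}(k,\theta^{\rho}_{\gamma}(k))\,d\rho$ and its $\partial_u b$, $\partial_x\sigma$, $\partial_u\sigma$ analogues. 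Subtracting the variational equation \eqref{A6} and substituting $z^{\delta}_{\gamma}=\xi^{\delta}_{\gamma}+\bar{x}_{\gamma}$ yields $\xi^{\delta}_{\gamma}(k+1)=\mathcal{M}^{\delta}_{\gamma}(k)\xi^{\delta}_{\gamma}(k)+R^{\delta}_{\gamma}(k)$ with $\xi^{\delta}_{\gamma}(0)=0$, where $\mathcal{M}^{\delta}_{\gamma}(k)$ collects the (bounded-by-$L$) interpolated $x$-derivatives of $b_{\gamma},\sigma_{\gamma}$ against $1$ and $B(k+1)$, and the remainder $R^{\delta}_{\gamma}(k)$ gathers the coefficient-difference terms such as $\big(\int_{0}^{1}\partial_x b_{\gamma}(k,\theta^{\rho}_{\gamma}(k))\,d\rho-\partial_x b^{*}_{\gamma}(k)\big)\bar{x}_{\gamma}(k)$, the corresponding $\partial_u b$ term paired with $\hat{u}(k)$, and the $\partial_x\sigma$, $\partial_u\sigma$ differences paired with $B^{i}(k+1)$.

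Because the $x$-derivative coefficients in $\mathcal{M}^{\delta}_{\gamma}$ are bounded by $L$ and $B(k+1)$ is independent of $\mathcal{F}_k$ while $\xi^{\delta}_{\gamma}(k),\bar{x}_{\gamma}(k),\hat{u}(k)$ are $\mathcal{F}_k$-measurable, repeating verbatim the induction used to prove \eqref{A3} reduces the claim to the inhomogeneous term:
$$\sup_{\gamma\in\Gamma}\mathbb{E}\Big[\sum_{k=0}^{N}|\xi^{\delta}_{\gamma}(k)|^{2}\Big]\le C(L,d,N)\sup_{\gamma\in\Gamma}\mathbb{E}\Big[\sum_{k=0}^{N-1}|R^{\delta}_{\gamma}(k)|^{2}\Big].$$
The two structural inputs for bounding the right-hand side are: each interpolated point obeys $|\theta^{\rho}_{\gamma}(k)-(x^{*}_{\gamma}(k),u^{*}(k))|\le\delta\big(|z^{\delta}_{\gamma}(k)|+|\hat{u}(k)|\big)$, so by (H3) every coefficient difference is dominated by $\psi\big(\delta(|z^{\delta}_{\gamma}(k)|+|\hat{u}(k)|)\big)$ with one and the same $\gamma$-independent modulus $\psi$; and by the boundedness of the derivatives in (H1) the same differences never exceed $2L$. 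Setting $\tilde{\psi}:=\min(2L,\psi)$, a bounded nondecreasing modulus, and using independence of $B(k+1)$ to absorb the noise factors, I obtain
$$\mathbb{E}\Big[\sum_{k=0}^{N-1}|R^{\delta}_{\gamma}(k)|^{2}\Big]\le C(L,d,N)\sum_{k=0}^{N-1}\mathbb{E}\Big[\tilde{\psi}\big(\delta(|z^{\delta}_{\gamma}(k)|+|\hat{u}(k)|)\big)^{2}\big(|\bar{x}_{\gamma}(k)|+|\hat{u}(k)|\big)^{2}\Big].$$

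The main obstacle, and the only place the robust (infinite-parameter) structure really bites, is forcing this last bound to vanish \emph{uniformly} over $\Gamma$. For this I would first record the uniform-in-$(\gamma,\delta)$ moment bounds $\sup_{\gamma,\delta}\mathbb{E}[\sum_k|z^{\delta}_{\gamma}(k)|^{q}]<\infty$ and $\sup_{\gamma}\mathbb{E}[\sum_k|\bar{x}_{\gamma}(k)|^{q}]<\infty$: the former follows from the same induction as \eqref{A3} applied to the linear recursion for $z^{\delta}_{\gamma}$, the latter is \eqref{A7} with $p=q$, and both are uniform precisely because the constants there depend only on $L,q,d,N$ and $\mathbb{E}[\sum_k(|u(k)|^{q}+|u^{*}(k)|^{q})]$, with (H1) supplying $|b_{\gamma}(k,0,0)|,|\sigma_{\gamma}(k,0,0)|\le L$ uniformly in $\gamma$. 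Then Hölder's inequality with exponents $q/2$ and $q/(q-2)$ separates the uniformly bounded factor $\mathbb{E}[(|\bar{x}_{\gamma}(k)|+|\hat{u}(k)|)^{q}]^{2/q}$ from $\mathbb{E}[\tilde{\psi}(\cdots)^{2q/(q-2)}]^{(q-2)/q}$. To kill the latter, given $\varepsilon>0$ I choose $\eta>0$ with $\tilde{\psi}(s)\le\varepsilon$ for $s\le\eta$ and split on $\{\delta(|z^{\delta}_{\gamma}(k)|+|\hat{u}(k)|)>\eta\}$; since $\tilde{\psi}\le 2L$, Chebyshev's inequality bounds the exceptional probability by $\delta^{2}\eta^{-2}\mathbb{E}[(|z^{\delta}_{\gamma}(k)|+|\hat{u}(k)|)^{2}]=O(\delta^{2})$ uniformly in $\gamma$. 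Hence $\limsup_{\delta\to0}\sup_{\gamma}\sum_k\mathbb{E}[\tilde{\psi}(\cdots)^{2q/(q-2)}]\le N\varepsilon^{2q/(q-2)}$, and letting $\varepsilon\to0$ drives the remainder, and therefore $\sup_{\gamma}\mathbb{E}[\sum_k|\xi^{\delta}_{\gamma}(k)|^{2}]$, to $0$, which is the assertion.
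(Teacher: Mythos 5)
Your proposal is correct and follows essentially the same route as the paper's own proof: the same error variable $\widetilde{x}^{\delta}_{\gamma}(k)=\delta^{-1}\big(x^{\delta}_{\gamma}(k)-x^{*}_{\gamma}(k)\big)-\bar{x}_{\gamma}(k)$, the same interpolation/Taylor decomposition into a linear recursion with bounded coefficients plus a remainder of coefficient-difference terms, the same reduction via the induction behind \eqref{A3} (exploiting $\mathcal{F}_k$-measurability against the independence of $B(k+1)$), and the same truncation-plus-H\"older conclusion resting on the $\gamma$-uniform modulus in (H3) and the $\gamma$-uniform $q$-th moment bounds. The only cosmetic difference is in how the final limit is organized: you split on the event $\{\delta(|z^{\delta}_{\gamma}(k)|+|\hat{u}(k)|)>\eta\}$ via Chebyshev and send $\delta\rightarrow 0$ then $\varepsilon\rightarrow 0$, whereas the paper truncates $|z^{\delta}_{\gamma}(k)|$ and $|\hat{u}(k)|$ at a fixed radius $R$, bounds the good set by $\psi(2\delta R)$, and sends $\delta\rightarrow 0$ then $R\rightarrow\infty$ --- the same double-limit argument.
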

\begin{proof}
For $k \in \mathcal{T}$, denote $\widetilde{x}_{\gamma}^{\delta}(k):=\delta^{-1}\left(x_{\gamma}^{\delta}(k)-{x}^*_{\gamma}(k)\right)-\bar{x}_{\gamma}(k)$ for convenience. From \eqref{A2} and \eqref{A6}, we have
\begin{small}
\begin{align}\label{A10}
\begin{split}
\widetilde{x}_{\gamma}^{\delta}(k+1)&=\frac{1}{\delta}\Big(\left(b_{\gamma}^{\delta}(k)-b_{\gamma}^*(k)\right)+\sum_{i=1}^{d}\left(\sigma_{\gamma}^{\delta,i}(k)-\sigma_{\gamma}^{*,i}(k)\right) B^i(k+1)\Big)-\Big(\partial_x b^*_{\gamma}(k)\bar{x}_{\gamma}(k)+\partial_u b^*_{\gamma}(k)\hat{u}(k)\Big)\\
&\ \ \ -\sum_{i=1}^{d}\left(\partial_x \sigma_{\gamma}^{*,i}(k) \bar{x}_{\gamma}(k)+\partial_u \sigma_{\gamma}^{*,i}(k)\hat{u}(k)\right) B^{i}(k+1).
\end{split}
\end{align}
\end{small}
Setting $x_{\gamma}^{\rho,\delta}(k)={x}_{\gamma}^*(k)+\rho\delta(\widetilde{x}_{\gamma}^{\delta}(k)+\bar{x}_{\gamma}(k))$, $u^{\rho,\delta}(k)={u}^*(k)+\rho\delta(u(k)-{u}^*(k))$, we only deal with the diffusion term in detail. Note that
\begin{align*}
\begin{split}
&\frac{1}{\delta}\sum_{i=1}^{d}\left(\sigma_{\gamma}^{\delta,i}(k)-\sigma_{\gamma}^{*,i}(k)\right) B^i(k+1)-\sum_{i=1}^{d}\left(\partial_x \sigma_{\gamma}^{*,i}(k) \bar{x}_{\gamma}(k)+\partial_u \sigma_{\gamma}^{*,i}(k)\big(u(k)-u^*(k)\big)\right)B^{i}(k+1)\\
&=\sum_{i=1}^{d}\int^1_0\partial_x \sigma_{\gamma}^{\rho,\delta,i}(k) \widetilde{x}_{\gamma}^{\delta}(k)B^{i}(k+1)d\rho+\sum_{i=1}^{d}G_{\gamma}^{\delta,i}(k)B^{i}(k+1),
\end{split}
\end{align*}
where $\sigma_{\gamma}^{\rho,\delta,i}(k)=\sigma_{\gamma}^{\delta,i}(k,x_{\gamma}^{\rho,\delta}(k),u^{\rho,\delta}(k))$ and
$$
G_{\gamma}^{\delta,i}(k)=\int^1_0\Big(\partial_x \sigma_{\gamma}^{\rho,\delta,i}(k)-\partial_x \sigma_{\gamma}^{*,i}(k)\Big)\bar{x}_{\gamma}(k)d\rho+\int^1_0\Big(\partial_u \sigma_{\gamma}^{\rho,\delta,i}(k)-\partial_u \sigma_{\gamma}^{*,i}(k)\Big)(u(k)-u^*(k))d\rho.
$$
The drift part can be dealt with in the same way, and denoting $\widetilde{G}_{\gamma}^{\delta}(k)$ similarly as $G_{\gamma}^{\delta}(k)$, \eqref{A10} can be rewritten as
\begin{small}
\begin{align}\label{A1000}
\widetilde{x}_{\gamma}^{\delta}(k+1)=\int^1_0\partial_x b_{\gamma}^{\rho,\delta}(k) \widetilde{x}_{\gamma}^{\delta}(k)d\rho+\widetilde{G}_{\gamma}^{\delta}(k)+\sum_{i=1}^{d}\int^1_0\partial_x \sigma_{\gamma}^{\rho,\delta,i}(k) \widetilde{x}_{\gamma}^{\delta}(k)B^{i}(k+1)d\rho+\sum_{i=1}^{d}G_{\gamma}^{\delta,i}(k)B^{i}(k+1).
\end{align}
\end{small}
Hence, proceeding identically as to derive \eqref{A3}, we obtain the following estimate
$$
\mathbb{E}\left[\sum_{k =0}^N|\widetilde{x}_{\gamma}^{\delta}(k)|^2\right]\leq C(L,d,N) \mathbb{E}\bigg[\sum_{k =0}^{N-1}\Big(|\widetilde{G}_{\gamma}^{\delta}(k)|^2+\sum_{i=1}^{d} |G_{\gamma}^{\delta,i}(k)|^2\Big)\bigg],
$$
which combines with \eqref{A7} and the boundedness of the derivatives in (H1) gives
\begin{align}\label{A11}
{\mathbb{E}} \left[\sum_{k =0}^N\left|\widetilde{x}_{\gamma}^{\delta}(k)\right|^{2}\right] \leq C(L, d, N){\mathbb{E}} \left[\sum_{k=0}^{N-1}\Big(\left|u(k)\right|^{2}+\left|u^*(k)\right|^{2}\Big)\right].
\end{align}
By assumptions (H1) and (H3), we get for each $R>0$
$$
|\widetilde{G}_{\gamma}^{\delta}(k)|^2+\sum_{i=1}^{d} |G_{\gamma}^{\delta,i}(k)|^2\leq C(L,d) \Big(|\bar{x}_{\gamma}(k)|^2+|u(k)-u^*(k)|^2\Big)\Big(\psi(2\delta R)^2+I_{R}\Big),
$$
where $I_R=I_{\{|\widetilde{x}_{\gamma}^{\delta}(k)+\bar{x}_{\gamma}(k)|\geq R\}}+I_{\{|u(k)-{u}^*(k)|\geq R\}}$. By virtue of \eqref{A7} and \eqref{A11} and using the same techniques as in Lemma \ref{myw201}, it holds that
$$
\begin{aligned}
\sup\limits_{\gamma\in\Gamma}\mathbb{E}\bigg[\sum_{k =0}^{N-1}\Big(|\widetilde{G}_{\gamma}^{\delta}(k)|^2+\sum_{i=1}^{d} |G_{\gamma}^{\delta,i}(k)|^2\Big)\bigg]\leq C(L,N,q,d)\Big(\psi(2\delta R)^2+R^{\frac{2-q}{q}}\Big).
\end{aligned}
$$
Sending $\delta\rightarrow 0$ and then  $R\rightarrow\infty$ implies the final conclusion.
\end{proof}

Let us define
\begin{align}\label{A12}
\bar{y}_{\gamma}^u(0)={\mathbb{E}} \left[\sum_{k=0}^{N-1} \Big[\partial_x f^*_{\gamma}(k)\bar{x}_{\gamma}(k)+\partial_u f^*_{\gamma}(k)(u(k)-u^*(k))\Big]+\partial_x \phi^*_{\gamma}(N)\bar{x}_{\gamma}(N)\right],
\end{align}
and
$$
\Lambda^{u^*}=\left\{\lambda\in\Lambda| J({u^*})=\int_{\Gamma} {y}^*_{\gamma}(0)\lambda(d\gamma)\right\}.
$$
It's worth noting that the set $\Lambda^{u^*}$ is not empty. Indeed, for any $\varepsilon_m=\frac{1}{2^m}$, we could find a sequence of $\lambda^m \in \Lambda$ such that
\[
J(u^*)\geq \int_{\Gamma} {y}^*_{\gamma}(0)\lambda^m(d\gamma) \geq J(u^*)-\frac{1}{2^m}.
\]
Since $\Lambda$ is weakly compact, there exists a subsequence $\left\{\lambda^{m_j}\right\}_{j=1}^{\infty}$ converging weakly to some $\lambda_1\in \Lambda$. Then, the following equation comes immediately from the result of Lemma \ref{myw202}
\begin{align}\label{A255}
J(u^*)= \int_{\Gamma} {y}^*_{\gamma}(0)\lambda_1(d\gamma)=\lim _{j \rightarrow \infty}\int_{\Gamma} {y}^*_{\gamma}(0)\lambda^{m_j}(d\gamma),
\end{align}
which implies $\lambda_1 \in \Lambda^{u^*}$.

If $\Gamma$ is a singleton set, from Lemma 2.2 in \cite{W}, the variational inequality can be given directly with respect to $\bar{y}_{\gamma}^u(0)$. However, things get a little more complicated when we consider robust case because of the subadditivity of the supremum. As a consequence, the weak convergence method is needed and the following lemma is indispensable.
\begin{lemma}\label{myw3002}
Under assumptions (H1)-(H4), the function
$\gamma\rightarrow\bar{y}^u_{\gamma}(0)$ is continuous and bounded.
\end{lemma}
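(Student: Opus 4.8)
The plan is to establish boundedness and continuity separately, following the template of the proof of Lemma \ref{myw202}.

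Boundedness is the easy half. By (H2) the factors $\partial_x f^*_\gamma(k)$, $\partial_u f^*_\gamma(k)$ and $\partial_x\phi^*_\gamma(N)$ are bounded by $L(1+|x^*_\gamma|+|u^*|)$, so I would apply Cauchy--Schwarz summand by summand, for instance
\[
\mathbb{E}\big[|\partial_x f^*_\gamma(k)\bar{x}_\gamma(k)|\big]\leq L\,\mathbb{E}\big[(1+|x^*_\gamma(k)|+|u^*(k)|)^2\big]^{1/2}\mathbb{E}\big[|\bar{x}_\gamma(k)|^2\big]^{1/2}.
\]
The first factor is bounded uniformly in $\gamma$ by \eqref{A3} with $p=2$ (recall $|b_\gamma(k,0,0)|+|\sigma_\gamma(k,0,0)|\leq L$ from (H1)) together with $u^*\in\mathcal{U}$, and the second by \eqref{A7}. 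Summing over $k$ and treating the remaining terms identically gives a bound on $|\bar{y}^u_\gamma(0)|$ independent of $\gamma$.

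The continuity is the substantive part, and its core is the auxiliary $L^2$-continuity of the variational process, namely
\[
\lim_{\epsilon\to 0}\ \sup_{\widetilde{d}(\gamma,\gamma')\leq\epsilon}\ \mathbb{E}\Big[\sum_{k=0}^N|\bar{x}_\gamma(k)-\bar{x}_{\gamma'}(k)|^2\Big]=0,
\]
the discrete analogue of \eqref{A4} for \eqref{A6}. To obtain it I would set $\beta(k)=\bar{x}_\gamma(k)-\bar{x}_{\gamma'}(k)$, subtract the two copies of \eqref{A6}, and add and subtract $\partial_x b^*_\gamma(k)\bar{x}_{\gamma'}(k)$ (and the analogue for each $\partial_x\sigma^{*,i}_\gamma$); then $\beta$ solves a system with the same homogeneous coefficients $\partial_x b^*_\gamma,\partial_x\sigma^{*,i}_\gamma$ as \eqref{A6} and source terms of the form $(\partial_x b^*_\gamma(k)-\partial_x b^*_{\gamma'}(k))\bar{x}_{\gamma'}(k)$, $(\partial_u b^*_\gamma(k)-\partial_u b^*_{\gamma'}(k))\hat{u}(k)$ and their $\sigma$-counterparts. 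Arguing as in the derivation of \eqref{A3} bounds $\mathbb{E}[\sum_k|\beta(k)|^2]$ by the $L^2$-norm of these source terms, so it remains to show each vanishes as $\widetilde{d}(\gamma,\gamma')\to 0$. For this I would split the $\gamma$-dependence of each coefficient into a direct and an indirect part, e.g.
\[
\partial_x b^*_\gamma(k)-\partial_x b^*_{\gamma'}(k)=\big(\partial_x b_\gamma(k,x^*_\gamma(k),u^*(k))-\partial_x b_{\gamma'}(k,x^*_\gamma(k),u^*(k))\big)+\big(\partial_x b_{\gamma'}(k,x^*_\gamma(k),u^*(k))-\partial_x b_{\gamma'}(k,x^*_{\gamma'}(k),u^*(k))\big),
\]
bounding the first bracket by $\psi_R(\widetilde{d}(\gamma,\gamma'))+2L(I_{\{|x^*_\gamma(k)|\geq R\}}+I_{\{|u^*(k)|\geq R\}})$ via (H4) and the boundedness of the derivatives in (H1), and the second by $\psi(|x^*_\gamma(k)-x^*_{\gamma'}(k)|)$ via (H3). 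Multiplying by the $L^q$-bounded factor $\bar{x}_{\gamma'}(k)$ (or $\hat{u}(k)$), squaring, taking expectation and using Cauchy--Schwarz on the indicator terms exactly as in Lemma \ref{myw201}, and \eqref{A4} on the $\psi$-term, then letting $\widetilde{d}(\gamma,\gamma')\to 0$ and $R\to\infty$ yields the claim.

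With this in hand I would estimate $\bar{y}^u_\gamma(0)-\bar{y}^u_{\gamma'}(0)$ term by term as in Lemma \ref{myw202}, writing each difference as $\partial_x f^*_\gamma(k)\beta(k)+(\partial_x f^*_\gamma(k)-\partial_x f^*_{\gamma'}(k))\bar{x}_{\gamma'}(k)$ and similarly for the $\partial_u f$ and $\partial_x\phi$ terms; the $\beta$-parts go to zero by the auxiliary continuity together with the boundedness of the derivative factors, and the coefficient-difference parts are handled precisely as above using (H3), (H4) and \eqref{A4}. The main obstacle is exactly this auxiliary continuity: in contrast to Lemma \ref{myw201}, the coefficients of the variational equation \eqref{A6} depend on $\gamma$ both explicitly and through $x^*_\gamma$, forcing me to combine the $\gamma$-modulus (H4), the $(x,u)$-modulus (H3) and the state continuity \eqref{A4} simultaneously; the delicate point is controlling $\mathbb{E}[\sum_k\psi(|x^*_\gamma(k)-x^*_{\gamma'}(k)|)^2|\bar{x}_{\gamma'}(k)|^2]$, which I would dominate by splitting on $\{|x^*_\gamma(k)-x^*_{\gamma'}(k)|\leq\eta\}$ and its complement and invoking the higher integrability of $\bar{x}_{\gamma'}$ from \eqref{A7}.
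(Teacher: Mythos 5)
Your proposal is correct and follows essentially the same route as the paper's appendix proof: boundedness via (H2), H\"older and the estimates \eqref{A3}, \eqref{A7}; then the auxiliary $L^2$-continuity of $\bar{x}_\gamma$ obtained by subtracting the two copies of \eqref{A6}, splitting each coefficient difference into a direct part controlled by (H4) and an indirect part controlled by (H3) together with \eqref{A4}; and finally a term-by-term estimate of $\bar{y}^u_\gamma(0)-\bar{y}^u_{\gamma'}(0)$. The only differences are cosmetic — you attach the homogeneous coefficients to $\gamma$ rather than $\gamma'$, and your event-splitting on $\{|x^*_\gamma(k)-x^*_{\gamma'}(k)|\leq\eta\}$ is just a rephrasing of the paper's $\epsilon$--$\xi$ modulus argument.
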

\begin{proof}
 The proof will be given in the appendix.
\end{proof}

Now let us display the main contribution of this subsection.
\begin{theorem}\label{myw302}
Let assumptions (H1)-(H5) hold. Then, for each $u \in \mathcal{U}$, there exists a reference probability $\lambda^u\in \Lambda^{u^*}$ such that
\[
\lim\limits_{\delta\rightarrow 0}\frac{J(u^{\delta})-J({u}^*)}{\delta}=\int_{\Gamma} \bar{y}^u_{\gamma}(0) {\lambda}^u(d\gamma)=\sup\limits_{\lambda\in\Lambda^{{u}^*}} \int_{\Gamma} \bar{y}^u_{\gamma}(0) \lambda(d\gamma).
\]

\end{theorem}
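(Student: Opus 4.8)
The plan is to treat $J$ as the pointwise supremum over the weakly compact set $\Lambda$ of the affine functionals $\lambda\mapsto\int_{\Gamma}y^\delta_\gamma(0)\,\lambda(d\gamma)$, where $y^\delta_\gamma(0):=\mathbb{E}[\sum_{k=0}^{N-1}f_\gamma(k,x^\delta_\gamma(k),u^\delta(k))+\phi_\gamma(x^\delta_\gamma(N))]$, and to establish a Danskin-type envelope formula for its directional derivative. The crucial preliminary step, which I expect to be the main obstacle, is to upgrade Lemma \ref{myw301} from the state process to the cost, i.e. to prove the uniform-in-$\gamma$ expansion
\[
\lim_{\delta\to0}\sup_{\gamma\in\Gamma}\Big|\frac{y^\delta_\gamma(0)-y^*_\gamma(0)}{\delta}-\bar y^u_\gamma(0)\Big|=0.
\]
Writing the increments of $f_\gamma$ and $\phi_\gamma$ along the segment $x^{\rho,\delta}_\gamma=x^*_\gamma+\rho(x^\delta_\gamma-x^*_\gamma)$, $u^{\rho,\delta}=u^*+\rho\delta\hat u$ via the fundamental theorem of calculus, the difference quotient splits into a term $\partial_x f_\gamma(\cdots)\,\widetilde x^\delta_\gamma$ plus a term $(\partial_x f_\gamma(\cdots)-\partial_x f^*_\gamma)\bar x_\gamma$ (and analogous $u$- and terminal contributions), where $\widetilde x^\delta_\gamma=\delta^{-1}(x^\delta_\gamma-x^*_\gamma)-\bar x_\gamma$. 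The first term I would handle by Cauchy--Schwarz: the linear growth of $\partial_x f_\gamma$ in (H2) together with the uniform $L^q$-bounds \eqref{A3} and \eqref{A7} keeps $\mathbb E|\partial_x f_\gamma(\cdots)|^2$ bounded uniformly in $(\delta,\gamma)$, while $\mathbb E|\widetilde x^\delta_\gamma|^2\to0$ uniformly by Lemma \ref{myw301}. The second term is controlled exactly as in Lemmas \ref{myw201} and \ref{myw301}: the modulus of continuity (H3) bounds $|\partial_x f_\gamma(\cdots)-\partial_x f^*_\gamma|$ by $\psi(\rho\delta(|\widetilde x^\delta_\gamma+\bar x_\gamma|+|\hat u|))$, after which a truncation at level $R$, the estimate \eqref{A7}, and sending $\delta\to0$ then $R\to\infty$ give the uniform convergence.

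Granting this expansion, the two one-sided estimates follow from the order structure of the supremum. For the upper bound, let $\lambda^\delta\in\Lambda$ attain $J(u^\delta)$ (the maximum exists since $y^\delta_\gamma(0)\in C_b(\Gamma)$ by Lemma \ref{myw202} and $\Lambda$ is weakly compact). Since $\lambda^\delta\in\Lambda$ and $J(u^*)=\sup_{\lambda\in\Lambda}\int_{\Gamma}y^*_\gamma(0)\lambda(d\gamma)$, I obtain $J(u^\delta)-J(u^*)\le\int_{\Gamma}(y^\delta_\gamma(0)-y^*_\gamma(0))\lambda^\delta(d\gamma)$, whence
\[
\frac{J(u^\delta)-J(u^*)}{\delta}\le\int_{\Gamma}\bar y^u_\gamma(0)\,\lambda^\delta(d\gamma)+o(1)
\]
by the expansion above (using that $\lambda^\delta$ is a probability measure). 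Along any sequence $\delta_n\to0$, weak compactness of $\Lambda$ extracts $\lambda^{\delta_n}\rightharpoonup\lambda^0\in\Lambda$; the uniform convergence $\sup_\gamma|y^\delta_\gamma(0)-y^*_\gamma(0)|\to0$ together with $y^*_\gamma(0)\in C_b(\Gamma)$ forces $\int_{\Gamma}y^*_\gamma(0)\lambda^{\delta_n}(d\gamma)\to\int_{\Gamma}y^*_\gamma(0)\lambda^0(d\gamma)$ and $J(u^{\delta_n})\to J(u^*)$, so that $\int_{\Gamma}y^*_\gamma(0)\lambda^0(d\gamma)=J(u^*)$, i.e. $\lambda^0\in\Lambda^{u^*}$. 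Using the weak continuity of $\lambda\mapsto\int_{\Gamma}\bar y^u_\gamma(0)\lambda(d\gamma)$ from Lemma \ref{myw3002}, passing to the limit gives $\limsup_{\delta\to0}\delta^{-1}(J(u^\delta)-J(u^*))\le\int_{\Gamma}\bar y^u_\gamma(0)\lambda^0(d\gamma)\le\sup_{\lambda\in\Lambda^{u^*}}\int_{\Gamma}\bar y^u_\gamma(0)\lambda(d\gamma)$.

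For the lower bound I would fix any $\lambda\in\Lambda^{u^*}$. Then $J(u^\delta)\ge\int_{\Gamma}y^\delta_\gamma(0)\lambda(d\gamma)$ while $J(u^*)=\int_{\Gamma}y^*_\gamma(0)\lambda(d\gamma)$, so $\delta^{-1}(J(u^\delta)-J(u^*))\ge\int_{\Gamma}\delta^{-1}(y^\delta_\gamma(0)-y^*_\gamma(0))\lambda(d\gamma)\to\int_{\Gamma}\bar y^u_\gamma(0)\lambda(d\gamma)$ by the expansion; taking the supremum over $\lambda\in\Lambda^{u^*}$ yields $\liminf_{\delta\to0}\delta^{-1}(J(u^\delta)-J(u^*))\ge\sup_{\lambda\in\Lambda^{u^*}}\int_{\Gamma}\bar y^u_\gamma(0)\lambda(d\gamma)$. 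Combining the two bounds proves that the limit exists and equals the supremum over $\Lambda^{u^*}$. Finally, the supremum is attained: $\Lambda^{u^*}$ is the preimage of the point $\{J(u^*)\}$ under the weakly continuous map $\lambda\mapsto\int_{\Gamma}y^*_\gamma(0)\lambda(d\gamma)$ (Lemma \ref{myw202}), hence a weakly closed, and thus weakly compact, subset of $\Lambda$, while $\lambda\mapsto\int_{\Gamma}\bar y^u_\gamma(0)\lambda(d\gamma)$ is weakly continuous by Lemma \ref{myw3002}; therefore the maximum is achieved at some $\lambda^u\in\Lambda^{u^*}$, the desired common reference probability.
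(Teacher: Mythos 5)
Your proposal is correct and follows essentially the same route as the paper: the uniform-in-$\gamma$ expansion of the cost (the paper's Step 1), then the two one-sided bounds obtained from the order structure of the supremum, weak compactness of $\Lambda$, and the $C_b(\Gamma)$-regularity of $y^*_\gamma(0)$ and $\bar{y}^u_\gamma(0)$ from Lemmas \ref{myw202} and \ref{myw3002}. The only (harmless) cosmetic differences are that you obtain the maximizer $\lambda^\delta$ of $J(u^\delta)$ by a direct Weierstrass argument rather than the paper's near-maximizer extraction, and your closing compactness argument for attainment over $\Lambda^{u^*}$ is actually redundant, since your weak limit $\lambda^0$ is already squeezed into being a maximizer by the chain $\limsup\le\int_\Gamma\bar{y}^u_\gamma(0)\lambda^0(d\gamma)\le\sup_{\lambda\in\Lambda^{u^*}}\int_\Gamma\bar{y}^u_\gamma(0)\lambda(d\gamma)\le\liminf$, exactly as in the paper.
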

\begin{proof}
The proof is based on the idea of Lemma 3.6 in \cite{HW}. We only present a brief overviews here. Denote $
\widetilde{y}_{\gamma}^{\delta}(0):=\delta^{-1}\left(y_{\gamma}^{\delta}(0)- {y}^*_{\gamma}(0)\right)-\bar{y}_{\gamma}^u(0)
$.\\
{\bf Step 1: ($
\lim\limits_{\delta\rightarrow 0}\sup\limits_{\gamma\in\Gamma}|\widetilde{y}^{\delta}_{\gamma}(0)|=0
$)}
We could decompose $\widetilde{y}_{\gamma}^{\delta}(0)$ in a similar way to $\widetilde{x}_{\gamma}^{\delta}(k+1)$ (see \eqref{A1000} in Lemma \ref{myw301}) as follows,
\begin{align}\label{A13}
\widetilde{y}_{\gamma}^{\delta}(0)={\mathbb{E}} \left[\int^1_0\partial_x \phi^{\rho,\delta}_{\gamma}(N)\widetilde{x}_{\gamma}^{\delta}(N)d\rho+\widetilde{C}_{\gamma}^{\delta}(N)+\sum_{k=0}^{N-1}\Big(\int^1_0\partial_x f^{\rho,\delta}_{\gamma}(k)\widetilde{x}_{\gamma}^{\delta}(k)d\rho+{C}_{\gamma}^{\delta}(k)\Big)\right],
\end{align}
where
$$
C_{\gamma}^{\delta}(k)=\int^1_0\Big(\partial_x f_{\gamma}^{\rho,\delta}(k)-\partial_x f_{\gamma}^{*}(k)\Big)\bar{x}_{\gamma}(k)d\rho+\int^1_0\Big(\partial_u f_{\gamma}^{\rho,\delta}(k)-\partial_u f_{\gamma}^{*}(k)\Big)(u(k)-u^*(k))d\rho.
$$
Note that, by assumption (H2) and the fact $
\delta\left(\bar{x}_{\gamma}(k)+\widetilde{x}_{\gamma}^{\delta}(k)\right)=x_{\gamma}^{\delta}(k)-{x}^*_{\gamma}(k)
$, we could get for any $R>0$,
\begin{align*}
\begin{split}
|{C}_{\gamma}^{\delta}(k)|&\leq C(L)\big(|\bar{x}_{\gamma}(k)|+|u(k)-u^*(k)|\big)\Big[\psi(2\delta R)+\big(1+|x_{\gamma}^{\delta}(k)|+|u(k)|+|{x}_{\gamma}^*(k)|+|u^*(k)|\big)I_R\Big].
\end{split}
\end{align*}
Then, a simple calculation gives
\begin{small}
\begin{align*}
\begin{split}
|\widetilde{C}_{\gamma}^{\delta}(N)|+\sum_{k=0}^{N-1}|{C}_{\gamma}^{\delta}(k)|\leq \Big[ 1+\sum_{k=0}^{N} \big(|\bar{x}_{\gamma}(k)|^2+|x_{\gamma}^{\delta}(k)|^2+|{x}_{\gamma}^*(k)|^2\big)+\sum_{k=0}^{N-1}\big(|u^*(k)|^2+|u(k)|^2\big)\Big]\big(\psi(2\delta R)+I_R\big).
\end{split}
\end{align*}
\end{small}
In view of the estimations \eqref{A3}, \eqref{A7} and \eqref{A11}, using similar analysis as in Lemma \ref{myw301}, we could get
$$
\lim\limits_{\delta\rightarrow 0}{\mathbb{E}} \left[|\widetilde{C}_{\gamma}^{\delta}(N)|+\sum_{k=0}^{N-1}|{C}_{\gamma}^{\delta}(k)|\right]=0,
$$
which together with \eqref{A13} and H{\"o}lder's inequality imply
\begin{small}
\begin{align}\label{A133}
\begin{split}
&\lim\limits_{\delta\rightarrow 0}\sup\limits_{\gamma\in\Gamma}|\widetilde{y}^{\delta}_{\gamma}(0)|\leq \lim\limits_{\delta\rightarrow 0}\sup\limits_{\gamma\in\Gamma}{\mathbb{E}} \bigg[\Big(\int^1_0|\partial_x \phi^{\rho,\delta}_{\gamma}(N)|d\rho + \sum_{k=0}^{N-1}\int^1_0|\partial_x f^{\rho,\delta}_{\gamma}(k)|d\rho\Big)\sum_{k=0}^{N} |\widetilde{x}_{\gamma}^{\delta}(k)| +|\widetilde{C}_{\gamma}^{\delta}(N)|+\sum_{k=0}^{N-1}|{C}_{\gamma}^{\delta}(k)|\bigg]\\
&\leq C(L,N) \lim\limits_{\delta\rightarrow 0}\sup\limits_{\gamma\in\Gamma}{\mathbb{E}} \bigg[1+\sum_{k=0}^{N}\Big(|x_{\gamma}^{\delta}(k)|^2+|{x}_{\gamma}^*(k)|^2\Big)+\sum_{k=0}^{N-1}\Big(|u(k)|^2+|u^*(k)|^2\Big)\bigg]^{\frac{1}{2}}\mathbb{E}\Big[\sum_{k=0}^{N}|\widetilde {x}^{\delta}_{\gamma}(k)|^2\Big]^{\frac{1}{2}}.
\end{split}
\end{align}
\end{small}
The desired result comes immediately from the result of Lemma \ref{myw301}.\\
{\bf Step 2: ( The weak convergence method)} For any $\lambda\in \Lambda^{u^*}$, in view of the definition of cost functional \eqref{A1}, we notice that
$$
J(u^{\delta})\geq \int_{\Gamma} y_{\gamma}^{\delta}(0){\lambda}(d\gamma),\ J(u^{*})= \int_{\Gamma} y_{\gamma}^{*}(0){\lambda}(d\gamma),
$$
which implies
\begin{align*}
\frac{J(u^{\delta})-J({u}^*)}{\delta}\geq \int_{\Gamma} \frac{y_{\gamma}^{\delta}(0)-y_{\gamma}^{*}(0)}{\delta} {\lambda}(d\gamma)=
\int_{\Gamma} (\widetilde{y}_{\gamma}^{\delta}(0)+\bar{y}^u_{\gamma}(0) ) {\lambda}(d\gamma).
\end{align*}
It follows from the result of step 1 that $\int_{\Gamma} \widetilde{y}_{\gamma}^{\delta}(0){\lambda}(d\gamma)=o(1)$, i.e. for any $\lambda\in \Lambda^{u^*}$,
\begin{align}\label{A266}
\liminf\limits_{\delta\rightarrow 0}\frac{J(u^{\delta})-J({u}^*)}{\delta}\geq \int_{\Gamma} \bar{y}^u_{\gamma}(0) \lambda(d\gamma).
\end{align}

Next, we will confirm that the reverse inequality is also true. Note that, there exists a sequence $\delta_n \rightarrow 0$ such that
\[
\limsup\limits_{\delta\rightarrow 0}\frac{J(u^{\delta})-J({u}^*)}{\delta}=\lim\limits_{n\rightarrow \infty}\frac{J(u^{\delta_n})-J({u}^*)}{\delta_n}.
\]
Proceeding in a similar manner as to derive \eqref{A255}, we could find $\lambda_{\delta_n}$ satisfying
$$
J(u^{\delta_n})=\int_{\Gamma} y_{\gamma}^{\delta_n}(0) \lambda_{\delta_n}(d\gamma),\ J(u^{*})\geq \int_{\Gamma} y_{\gamma}^{*}(0){\lambda}_{\delta_n}(d\gamma).
$$
Therefore, by using the result of step 1 again, we can demonstrate that
\begin{align*}
\begin{split}
\limsup\limits_{\delta\rightarrow 0}\frac{J(u^{\delta})-J({u}^*)}{\delta}\leq \lim\limits_{n\rightarrow \infty}\int_{\Gamma} (\widetilde{y}_{\gamma}^{\delta_n}(0)+\bar{y}^u_{\gamma}(0) )\lambda_{\delta_n}(d\gamma)=\lim\limits_{n\rightarrow \infty}\int_{\Gamma} \bar{y}^u_{\gamma}(0) {\lambda_{\delta_n}}(d\gamma).
\end{split}
\end{align*}
Due to the weak compactness of $\Lambda$, there exists a probability $\lambda^u$ such that $\lambda_{\delta_n}\stackrel{w}{\rightarrow}\lambda^u$. Then it follows from the result of Lemma \ref{myw3002} that
\begin{align}\label{A277}
\limsup\limits_{\delta\rightarrow 0}\frac{J(u^{\delta})-J({u}^*)}{\delta}\leq  \int_{\Gamma} \bar{y}^u_{\gamma}(0)\lambda^u(d\gamma).
\end{align}
 Furthermore, with the help of the subadditivity of the supremum and the definition of weak convergence, we obtain
\begin{small}
\begin{align*}
\begin{split}
&\Big|J(u^*)-\int_{\Gamma} y_{\gamma}^{*}(0) \lambda^u(d\gamma)\Big|\leq \lim\limits_{n\rightarrow \infty}\bigg(\Big|J(u^*)-J(u^{\delta_n})\Big|+\Big|\int_{\Gamma} y_{\gamma}^{\delta_n}(0) \lambda_{\delta_n}(d\gamma)-\int_{\Gamma} y_{\gamma}^{*}(0) \lambda_{\delta_n}(d\gamma)\Big|\\
&\ \ \ \ \ \ \ \ \ \ \ \ \ +\Big|\int_{\Gamma} y_{\gamma}^{*}(0) \lambda_{\delta_n}(d\gamma)-\int_{\Gamma}y_{\gamma}^{*}(0) \lambda^u(d\gamma)\Big|\bigg)\leq 2\lim\limits_{n\rightarrow \infty} \sup\limits_{\gamma\in\Gamma}\Big|y_{\gamma}^{\delta_n}(0)-y_{\gamma}^{*}(0)\Big|\\
&\leq 2\lim\limits_{n\rightarrow \infty} \sup\limits_{\gamma\in\Gamma}\delta_n\Big(|\widetilde{y}_{\gamma}^{\delta_n}(0)|+|\bar{y}^u_{\gamma}(0)|\Big)
\leq \lim\limits_{n\rightarrow \infty} \sup\limits_{\gamma\in\Gamma}C(L,d,N,x_0)\Big(1+\mathbb{E}\Big[\sum_{k =0}^{N-1}(|u(k)|^2+|u^{*}(k)|^2)\Big]\Big)\delta_n=0.
\end{split}
\end{align*}
\end{small}
where we have used \eqref{A11}, \eqref{A133} and \eqref{AAA1} in the last inequality. That is $\lambda^u \in \Lambda^{u^*}$, which together with \eqref{A266} and \eqref{A277} gives the final result.
\end{proof}

Nevertheless, the necessary condition gotten in Theorem \ref{myw302} is inconvenient for practice, since the reference probability depends on the $u$ given in advance. In what follows, we go one step further with the help of minimax theorem obtaining the variational inequality with a
common reference probability.
\begin{theorem}[Variational inequality]\label{myw303}
Suppose that assumptions (H1)-(H5) hold. Then, there exists a reference probability $\lambda^* \in \Lambda^{u^*}$ such that
\[
\inf\limits_{u\in\mathcal{U}}\int_{\Gamma} \bar{y}^u_{\gamma}(0) {\lambda}^*(d\gamma)\geq 0.
\]
\end{theorem}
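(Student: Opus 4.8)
The plan is to recast the statement as a minimax equality and then invoke Sion's minimax theorem. Define the functional $\Phi(u,\lambda):=\int_{\Gamma}\bar{y}^u_{\gamma}(0)\,\lambda(d\gamma)$ on $\mathcal{U}\times\Lambda^{u^*}$. First I would observe that since $u^*$ is optimal and $u^{\delta}\in\mathcal{U}$ for every $\delta\in(0,1)$, we have $J(u^{\delta})\geq J(u^*)$, so the difference quotient is nonnegative; passing to the limit and using Theorem \ref{myw302} gives
\[
\sup_{\lambda\in\Lambda^{u^*}}\Phi(u,\lambda)=\lim_{\delta\to0}\frac{J(u^{\delta})-J(u^*)}{\delta}\geq 0\qquad\text{for every }u\in\mathcal{U},
\]
and hence $\inf_{u\in\mathcal{U}}\sup_{\lambda\in\Lambda^{u^*}}\Phi(u,\lambda)\geq 0$. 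The whole point is then to exchange the infimum and the supremum so that a single reference measure can be pulled out in front of the infimum; note that the trivial weak-duality inequality $\sup\inf\leq\inf\sup$ goes the wrong way, so a genuine minimax equality is needed.

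To justify the exchange I would verify the hypotheses of Sion's minimax theorem. For the $\lambda$ variable: $\Lambda^{u^*}$ is convex, being the set of maximizers of the weakly continuous linear functional $\lambda\mapsto\int_{\Gamma}y^*_{\gamma}(0)\,\lambda(d\gamma)$ over the convex set $\Lambda$ (a face of $\Lambda$); it is weakly compact, being a weakly closed subset of the weakly compact $\Lambda$, where closedness uses the weak continuity of that functional guaranteed by $y^*_{\gamma}(0)\in C_b(\Gamma)$ (Lemma \ref{myw202}). Moreover $\Phi(u,\cdot)$ is linear, hence quasiconcave, and weakly continuous, hence upper semicontinuous, on $\Lambda^{u^*}$, since $\bar{y}^u_{\gamma}(0)\in C_b(\Gamma)$ by Lemma \ref{myw3002}. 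For the $u$ variable: $\mathcal{U}$ is convex because each $U_k$ is convex; and, crucially, because the variational equation \eqref{A6} is linear in $\hat{u}=u-u^*$ with coefficients frozen at the optimal pair, the map $u\mapsto\bar{x}_{\gamma}$ is linear and therefore $u\mapsto\bar{y}^u_{\gamma}(0)$ given by \eqref{A12} is affine; consequently $\Phi(\cdot,\lambda)$ is affine, hence quasiconvex and lower semicontinuous in the norm topology of controls, the continuity being controlled by estimate \eqref{A7}.

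Applying Sion's theorem then yields
\[
\sup_{\lambda\in\Lambda^{u^*}}\inf_{u\in\mathcal{U}}\Phi(u,\lambda)=\inf_{u\in\mathcal{U}}\sup_{\lambda\in\Lambda^{u^*}}\Phi(u,\lambda)\geq 0 .
\]
Finally I would show that the outer supremum is attained: the function $\lambda\mapsto\inf_{u\in\mathcal{U}}\Phi(u,\lambda)$ is an infimum of weakly continuous functions, hence weakly upper semicontinuous, and $\Lambda^{u^*}$ is weakly compact, so the supremum is achieved at some $\lambda^*\in\Lambda^{u^*}$. For this $\lambda^*$ we obtain $\inf_{u\in\mathcal{U}}\int_{\Gamma}\bar{y}^u_{\gamma}(0)\,\lambda^*(d\gamma)\geq 0$, which is exactly the assertion.

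The main obstacle I expect is not the minimax application itself but the careful verification of its hypotheses in the right topologies: confirming that $\Lambda^{u^*}$ is convex and weakly compact and that $\Phi$ is weakly upper semicontinuous in $\lambda$ (both resting on the continuity of Lemmas \ref{myw202} and \ref{myw3002}), together with the observation that the linearity of the variational system \eqref{A6} renders $\bar{y}^u_{\gamma}(0)$ affine in $u$, thereby supplying the convexity in the control variable. Once these structural and topological facts are secured, the exchange of $\inf$ and $\sup$ and the attainment of the supremum follow routinely.
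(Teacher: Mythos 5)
Your proposal is correct and takes essentially the same route as the paper: both verify convexity and weak compactness of $\Lambda^{u^*}$ and affinity/continuity of $(u,\lambda)\mapsto\int_{\Gamma}\bar{y}^u_{\gamma}(0)\,\lambda(d\gamma)$, invoke a minimax theorem (the paper cites Theorem B.1.2 of Pham where you cite Sion's — the same tool) together with Theorem \ref{myw302} and the optimality of $u^*$ to get $\sup_{\lambda}\inf_{u}\geq 0$, and then extract $\lambda^*$ from weak compactness of $\Lambda^{u^*}$ using $\bar{y}^u_{\gamma}(0)\in C_b(\Gamma)$ (Lemma \ref{myw3002}). The only cosmetic difference is the final attainment step — the paper passes to a weakly convergent subsequence of an $\varepsilon_m$-maximizing sequence, while you note that $\lambda\mapsto\inf_{u}\Phi(u,\lambda)$ is weakly upper semicontinuous on a compact set — and these arguments are equivalent.
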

\begin{proof}
Firstly, we need to verify $\ell(u,\lambda):=\int_{\Gamma} \bar{y}^u_{\gamma}(0)\lambda(d\gamma)$ satisfies the conditions of the minimax theorem (see Theorem B.1.2 in \cite{Ph}). $\Lambda^{u^*}$ inherits the convexity and weakly compactness of $\Lambda$. And, from \eqref{A6} and \eqref{A12}, it is easy to deduce that
$$
\bar{y}_{\gamma}(0)^{\tau u+(1-\tau)u^{\prime}}=\tau \bar{y}^u_{\gamma}(0)+(1-\tau)\bar{y}^{u^{\prime}}_{\gamma}(0),\ \text{for}\ \tau\in [0,1],\ u,u^{\prime} \in\mathcal{U},
$$
which means $\ell(u,\lambda)$ is convex with respect to $u$. Besides, recalling \eqref{A6}, we have for each $k\in \mathcal{T}$
\begin{align*}
\left\{\begin{array}{l}
\bar{x}^u_{\gamma}(k+1)-\bar{x}^{u^{\prime}}_{\gamma}(k+1)=\partial_x b^*_{\gamma}(k)\Big(\bar{x}^u_{\gamma}(k)-\bar{x}^{u^{\prime}}_{\gamma}(k)\Big)+\partial_u b^*_{\gamma}(k)\Big(u(k)-u^{\prime}(k)\Big)\\
\ \ \ \ \ \ \ \ \ \ \ \ \ +\sum\limits_{i=1}^{d}\left\{\partial_x \sigma_{\gamma}^{*,i}(k) \Big(\bar{x}^u_{\gamma}(k)-\bar{x}^{u^{\prime}}_{\gamma}(k)\Big)+\partial_u \sigma_{\gamma}^{*,i}(k) \Big(u(k)-u^{\prime}(k)\Big)\right\} B^{i}(k+1), \\
\bar{x}^u_{\gamma}(0)-\bar{x}^{u^{\prime}}_{\gamma}(0)=0.
\end{array}\right.
\end{align*}
Following similar steps as to derive \eqref{A3}, we obtain
\[
\mathbb{E}\left[\sum_{k =0}^N|\bar{x}^{u}_{\gamma}(k)-\bar{x}^{u^{\prime}}_{\gamma}(k)|^p\right]\leq C(L,p,d,N) \mathbb{E}\left[\sum_{k =0}^{N-1}|u(k)-u^{\prime}(k)|^p\right].
\]
Combining this with (H2), \eqref{A3}, \eqref{A12} and H{\"o}lder's inequality, we could get the continuity of $\ell$ with respect to $u$, i.e.,
\begin{align*}
\begin{split}
&\Big|\ell(u,\lambda)-\ell(u^{\prime},\lambda)\Big|\leq \sup\limits_{\gamma\in\Gamma}|\bar{y}^u_{\gamma}(0)-\bar{y}^{u^{\prime}}_{\gamma}(0)|\\
&\leq {\mathbb{E}} \bigg[\sum_{k=0}^{N-1}|\partial_u f^*_{\gamma}(k)||u(k)-u^{\prime}(k)|+\sum_{k=0}^{N-1}|\partial_x f^*_{\gamma}(k)||\bar{x}^{u}_{\gamma}(k)-\bar{x}^{u^{\prime}}_{\gamma}(k)|+|\partial_x \phi^*_{\gamma}(N)||\bar{x}^{u}_{\gamma}(N)-\bar{x}^{u^{\prime}}_{\gamma}(N)|\bigg]\\
&\leq C(N)\mathbb{E}\Big[\sum_{k =0}^{N-1}|\partial_x f^*_{\gamma}(k)|^2+|\partial_x \phi^*_{\gamma}(N)|^2\Big]^{\frac{1}{2}}\mathbb{E}\Big[\sum_{k =0}^{N}|\bar{x}^{u}_{\gamma}(k)-\bar{x}^{u^{\prime}}_{\gamma}(k)|^2\Big]^{\frac{1}{2}}\\
&\ \ \ +C(N)\mathbb{E}\Big[\sum_{k =0}^{N-1}|\partial_u f^*_{\gamma}(k)|^2\Big]^{\frac{1}{2}}\mathbb{E}\Big[\sum_{k =0}^{N-1}|u(k)-u^{\prime}(k)|^2\Big]^{\frac{1}{2}}
\leq C(L,N,d,x_0,u^*)\mathbb{E}\Big[\sum_{k =0}^{N-1}|u(k)-u^{\prime}(k)|^2\Big]^{\frac{1}{2}}.
\end{split}
\end{align*}
Therefore, it follows from minimax theorem and the result of Theorem \ref{myw302} that
\[
\inf\limits_{u\in\mathcal{U}}\sup\limits_{\lambda\in\Lambda^{u^*}} \ell(u,\lambda)=\sup\limits_{\lambda\in\Lambda^{u^*}}\inf\limits_{u\in\mathcal{U}}\ell(u,\lambda)\geq 0.
\]

In accordance with the definition of supremum, for any $\varepsilon_m=\frac{1}{2^m}$, there exists a $\lambda^m \in \Lambda^{u^*}$ such that
\[ \inf\limits_{u\in\mathcal{U}}\int_{\Gamma} \bar{y}^u_{\gamma}(0)\lambda^m(d\gamma) \geq \sup\limits_{\lambda\in\Lambda^{u^*}}\inf\limits_{u\in\mathcal{U}}\int_{\Gamma} \bar{y}^u_{\gamma}(0)\lambda(d\gamma)-\frac{1}{2^m}\geq-\frac{1}{2^m} .
\]
Since $\Lambda^{u^*}$ is weakly compact, we could find a subsequence $\left\{\lambda^{m_j}\right\}_{j=1}^{\infty}$ that converges weakly to some $\lambda^* \in \Lambda^{u^*}$. From which it follows that
\begin{align*}
\int_{\Gamma} \bar{y}^u_{\gamma}(0)\lambda^*(d\gamma)=\lim _{j \rightarrow \infty}\int_{\Gamma} \bar{y}^u_{\gamma}(0)\lambda^{m_j}(d\gamma)\geq 0,
\end{align*}
where we have used the fact $\bar{y}^u_{\gamma}(0)\in C_b(\Gamma)$ in Lemma \ref{myw3002}. This completes the proof.
\end{proof}
\subsection{Adjoint equations and necessary conditions for optimality}
Introduce the following adjoint equation which is a discrete time backward stochastic differential equation for $k=0, \ldots, N-2$
\begin{small}
\begin{align}\label{A14}
\begin{cases}
&P_{\gamma}(k)=\mathbb{E}\left[(\partial_x b^*_{\gamma}(k+1))^{\top}P_{\gamma}(k+1)+\sum\limits_{i=1}^d (\partial_x \sigma^{*,i}_{\gamma}(k+1))^{\top}Q^i_{\gamma}(k+1)+(\partial_x f^*_{\gamma}(k+1))^{\top}\mid \mathcal{F}_k\right],\\
&Q_{\gamma}(k)=\mathbb{E}\left[\Big((\partial_x b^*_{\gamma}(k+1))^{\top}P_{\gamma}(k+1)+\sum\limits_{i=1}^d (\partial_x \sigma^{*,i}_{\gamma}(k+1))^{\top}Q^i_{\gamma}(k+1)+(\partial_x f^*_{\gamma}(k+1))^{\top}\Big)B(k+1)^{\top}\mid \mathcal{F}_k\right],\\
&P_{\gamma}(N-1)=\mathbb{E}\left[(\partial_x \phi^*_{\gamma}(N))^{\top}\mid \mathcal{F}_{N-1}\right],\\
&Q_{\gamma}(N-1)=\mathbb{E}\left[(\partial_x \phi^*_{\gamma}(N))^{\top}B(N)^{\top}\mid \mathcal{F}_{N-1}\right].
\end{cases}
\end{align}
\end{small}
Similar to \eqref{A9}, the explicit solution of \eqref{A14} can also be given. Recalling the definition of $M_{\gamma}(k)$ in \eqref{A8} and by backward induction
\begin{small}
\begin{align}\label{A15}
\left\{\begin{array}{l}
P_{\gamma}(k)=\mathbb{E}\left[(\partial_x f^*_{\gamma}(k+1))^{\top}+\Big(\sum\limits_{n=k+2}^N\big(\prod\limits_{j=k+1}^{n-1} M_{\gamma}(j)^{\top}\big) (\partial_x f^*_{\gamma}(n))^{\top}\Big)\mid \mathcal{F}_k\right], \\
Q_{\gamma}(k)=\mathbb{E}\left[(\partial_x f^*_{\gamma}(k+1))^{\top}B(k+1)^{\top}+\Big(\sum\limits_{n=k+2}^N\big(\prod\limits_{j=k+1}^{n-1} M_{\gamma}(j)^{\top}\big) (\partial_x f^*_{\gamma}(n))^{\top}\Big)B(k+1)^{\top} \mid \mathcal{F}_k\right]
\end{array}\right.
\end{align}
\end{small}
satisfies \eqref{A14} for $k=0, \cdots, N-1$ and we suppose $\partial_x f^*_{\gamma}(N)=\partial_x \phi^*_{\gamma}(N)$ and $\sum_{n=N+1}^N[\cdot]=0$. Combining \eqref{A6} and \eqref{A14}, we could see that
\begin{footnotesize}
\begin{align}\label{A16}
\begin{split}
&\mathbb{E}\Big[\partial_x \phi^*_{\gamma}(N)\bar{x}_{\gamma}(N)\mid \mathcal{F}_{N-1}\Big]=\mathbb{E}\bigg[\partial_x \phi^*_{\gamma}(N)\Big[ \partial_x b^*_{\gamma}(N-1)\bar{x}_{\gamma}(N-1)+\partial_u b^*_{\gamma}(N-1)(u(N-1)-u^*(N-1))\\
&\ \ \ + \sum\limits_{i=1}^{d}\left\{\partial_x \sigma_{\gamma}^{*,i}(N-1) \bar{x}_{\gamma}(N-1)+\partial_u \sigma_{\gamma}^{*,i}(N-1) (u(N-1)-u^*(N-1))\right\} B^{i}(N)\Big]\Big| \mathcal{F}_{N-1}\bigg]\\
&= \mathbb{E}\Big[\partial_x \phi^*_{\gamma}(N)\mid \mathcal{F}_{N-1}\Big]\Big( \partial_x b^*_{\gamma}(N-1)\bar{x}_{\gamma}(N-1)+\partial_u b^*_{\gamma}(N-1)(u(N-1)-u^*(N-1))\Big)\\
&\ \ \ +\sum\limits_{i=1}^{d}\mathbb{E}\Big[\partial_x \phi^*_{\gamma}(N)B^{i}(N)\mid \mathcal{F}_{N-1}\Big]\Big(\partial_x \sigma_{\gamma}^{*,i}(N-1) \bar{x}_{\gamma}(N-1)+\partial_u \sigma_{\gamma}^{*,i}(N-1) (u(N-1)-u^*(N-1))  \Big)\\
&=\Big((P_{\gamma}(N-1))^{\top}\partial_x b^*_{\gamma}(N-1)+\sum\limits_{i=1}^d(Q^i_{\gamma}(N-1))^{\top}\partial_x \sigma_{\gamma}^{*,i}(N-1)\Big)\bar{x}_{\gamma}(N-1)\\
&\ \ \ +\Big((P_{\gamma}(N-1))^{\top}\partial_u b^*_{\gamma}(N-1)+\sum\limits_{i=1}^d(Q^i_{\gamma}(N-1))^{\top}\partial_u \sigma_{\gamma}^{*,i}(N-1)\Big)(u(N-1)-u^*(N-1)),
\end{split}
\end{align}
\end{footnotesize}
and moreover for $k=0, \ldots, N-2$
\begin{footnotesize}
\begin{align}\label{A17}
\begin{split}
&\mathbb{E}\bigg[\Big((P_{\gamma}(k+1))^{\top}\partial_x b^*_{\gamma}(k+1)+\sum\limits_{i=1}^d (Q^i_{\gamma}(k+1))^{\top}\partial_x \sigma^{*,i}_{\gamma}(k+1)+\partial_x f^*_{\gamma}(k+1)\Big)\bar{x}_{\gamma}(k+1)\Big| \mathcal{F}_k\bigg]\\
&=\mathbb{E}\bigg[\Delta_{\gamma}(k+1)\Big(\big(\partial_x b^*_{\gamma}(k)+\sum\limits_{i=1}^{d}\partial_x \sigma_{\gamma}^{*,i}(k)B^{i}(k+1)\big)\bar{x}_{\gamma}(k)+\big(\partial_u b^*_{\gamma}(k)+\sum\limits_{i=1}^{d}\partial_u \sigma_{\gamma}^{*,i}(k)B^{i}(k+1)\big)(u(k)-u^*(k))\Big)\Big| \mathcal{F}_k\bigg]\\
&=\Big((P_{\gamma}(k))^{\top}\partial_x b^*_{\gamma}(k)+\sum\limits_{i=1}^d(Q^i_{\gamma}(k))^{\top}\partial_x \sigma_{\gamma}^{*,i}(k)\Big)\bar{x}_{\gamma}(k)+\Big((P_{\gamma}(k))^{\top}\partial_u b^*_{\gamma}(k)+\sum\limits_{i=1}^d(Q^i_{\gamma}(k))^{\top}\partial_u \sigma_{\gamma}^{*,i}(k)\Big)(u(k)-u^*(k)),
\end{split}
\end{align}
\end{footnotesize}
where we denote $\Delta_{\gamma}(k+1)=(P_{\gamma}(k+1))^{\top}\partial_x b^*_{\gamma}(k+1)+\sum\limits_{i=1}^d (Q^i_{\gamma}(k+1))^{\top}\partial_x \sigma^{*,i}_{\gamma}(k+1)+\partial_x f^*_{\gamma}(k+1)$ for simplicity in the second line. Hence, putting \eqref{A16} and \eqref{A17} together gives
\begin{small}
\begin{align}\label{A177}
\begin{split}
&\mathbb{E}\Big[\sum_{k =0}^{N-1}\partial_x f^*_{\gamma}(k)\bar{x}_{\gamma}(k)+\partial_x \phi^*_{\gamma}(N)\bar{x}_{\gamma}(N)\Big]=\mathbb{E}\Big[\sum_{k =0}^{N-1}\Big((P_{\gamma}(k))^{\top}\partial_u b^*_{\gamma}(k)+\sum\limits_{i=1}^d(Q^i_{\gamma}(k))^{\top}\partial_u \sigma_{\gamma}^{*,i}(k)\Big)(u(k)-u^*(k))\Big],
\end{split}
\end{align}
\end{small}
which reveals the duality relationship between $(P_{\gamma},Q_{\gamma})$ and $\bar{x}_{\gamma}$.

Define a Hamiltonian function as below for any $k\in \mathcal{T}$
$$
\mathscr{H}_{\gamma}(k, x, u, P_{\gamma}, Q_{\gamma}):=f_{\gamma} (k, x,  u)+\left\langle P_{\gamma},b_{\gamma}(k, x,  u)\right\rangle +\sum\limits_{i=1}^d\left\langle Q_{\gamma}^i,\sigma^i_{\gamma}(k, x,  u)\right\rangle.
$$
Then, by \eqref{A12} and \eqref{A177}, we could rewrite the variational inequality as
\begin{align}\label{A18}
\inf\limits_{u\in\mathcal{U}}\int_{\Gamma}\mathbb{E}\bigg[\sum_{k=0}^{N-1} \left\langle \partial_u \mathscr{H}_{\gamma}(k,{x}^*_{\gamma}, {u}^*, P_{\gamma}, Q_{\gamma}), (u(k)-{u}^*(k))\right\rangle \bigg]{\lambda}^*(d\gamma)\geq 0.
\end{align}

\begin{remark}\label{myw304}
In \cite{W} and \cite{D}, to ensure the Hamiltonian system well-defined, they assumed the integrability of $B$ and $u$ depend on the time horizon $N$, while in fact, this restriction can be removed by using similar argument as in \cite{HJL}. More precisely, according to  (H2) and \eqref{A3}, we could get
$$
\mathbb{E}\left[|\partial_x f^*_{\gamma}(n)|^2\right]\leq C(L) \mathbb{E}\left[1+|x_{\gamma}^*(n)|^2+|u^*(n)|^2\right]\leq C(L, d, N,x_0).
$$
Moreover, by (H1) and the definition of $M_{\gamma}(j)$ in \eqref{A8}, we have for each $k \in \mathcal{T}$
$$
\big(\prod_{j=k+1}^{n-1} |M_{\gamma}(j)|\big)|B(k+1)|\leq C(L) \prod_{j=k+1}^{n} \big(1+|B(j)|\big).
$$
Then, it follows from H{\"o}lder's inequality that
\begin{small}
\begin{align*}
\begin{split}
&\mathbb{E}\Big[\big|\big(\prod_{j=k+1}^{n-1} M_{\gamma}(j)^{\top}\big) (\partial_x f^*_{\gamma}(n))^{\top}B(k+1)^{\top}\big||u(k)|\Big]\leq \mathbb{E}\Big[|\partial_x f^*_{\gamma}(n)|^2\Big]^{\frac{1}{2}}\mathbb{E}\Big[\prod_{j=k+1}^{n-1} |M_{\gamma}(j)|^2|B(k+1)|^2|u(k)|^2\Big]^{\frac{1}{2}}\\
&\leq C(L, d, N,x_0) \mathbb{E}\Big[\prod_{j=k+1}^{n} \big(1+|B(j)|^2\big)|u(k)|^2\Big]^{\frac{1}{2}}\leq C(L, d, N,x_0) \prod_{j=k+1}^{n}  \mathbb{E}\Big[1+|B(j)|^2\Big]^{\frac{1}{2}}\mathbb{E}\Big[|u(k)|^2\Big]^{\frac{1}{2}}<\infty,
\end{split}
\end{align*}
\end{small}
where the independence of $B$ plays a key role in the last inequality. In view of \eqref{A15} and the fact $u(k)$ is $\mathcal{F}_k$-measurable, we conclude that for any $u \in \mathcal{U}$ and $k \in \mathcal{T}$,
$
\mathbb{E}\left[|P_{\gamma}(k)||u(k)|+|Q_{\gamma}(k)||u(k)|\right]< \infty.
$

\end{remark}

Now, we are in a position to formulate the main result of this paper, the necessary condition for optimality of the problem $(\star)$.
\begin{theorem}[Maximum principle]\label{myw306}
Let assumptions (H1)-(H5) hold and let ${u}^*=\{u^*(k),k \in \mathcal{T}\}$ be an optimal control of problem $(\star)$ with ${x}_{\gamma}^*=\{x^*_{\gamma}(k),k \in \mathcal{T}^{\prime\prime}\}$ being the corresponding optimal state process. Then, there exists a probability measure $\lambda^* \in \Lambda^{u^*}$ and a solution $(P_{\gamma},Q_{\gamma})$ to \eqref{A14} such that for any $u\in U_k$ and $k \in \mathcal{T}$ the following inequality holds,
\begin{align}\label{A19}
\int_{\Gamma} \left\langle \partial_u \mathscr{H}_{\gamma}(k,{x}^*_{\gamma}, {u}^*, P_{\gamma}, Q_{\gamma}),(u-{u}^*(k))\right\rangle {\lambda}^*(d\gamma)\geq 0, \ \text{$d\mathbb{P}$-a.e..}
\end{align}
\end{theorem}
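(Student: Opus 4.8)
The plan is to deduce the pointwise condition \eqref{A19} from the aggregated variational inequality \eqref{A18} by a convex spike (localization) argument, after first assembling the ingredients already in hand. The measure $\lambda^*\in\Lambda^{u^*}$ is furnished by Theorem \ref{myw303}, and the pair $(P_\gamma,Q_\gamma)$ is the explicit solution \eqref{A15} of the adjoint system \eqref{A14}; substituting the duality identity \eqref{A177} into the variational inequality of Theorem \ref{myw303}, together with \eqref{A12} and the definition of $\mathscr{H}_\gamma$, yields precisely \eqref{A18}. Thus the only genuinely new work is to pass from the summed-and-integrated inequality \eqref{A18} to the inequality \eqref{A19}, which is pointwise in $k$ and holds $d\mathbb{P}$-a.e.

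To localize, fix $k_0\in\mathcal{T}$, a point $v\in U_{k_0}$, and an arbitrary event $A\in\mathcal{F}_{k_0}$. Define $u\in\mathcal{U}$ by $u(k)=u^*(k)$ for $k\neq k_0$ and $u(k_0)=u^*(k_0)+I_A\,(v-u^*(k_0))$. Then $u(k_0)$ is $\mathcal{F}_{k_0}$-measurable, takes values in $U_{k_0}$ (for each $\omega$ it equals either $v$ or $u^*(k_0)$, both in $U_{k_0}$), and inherits the $q$-integrability of $u^*$, so $u$ is admissible. Inserting this $u$ into \eqref{A18}, every summand with $k\neq k_0$ vanishes because $u(k)-u^*(k)=0$ there, leaving only the $k_0$ term carrying the factor $(v-u^*(k_0))\,I_A$.

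Next I would interchange $\mathbb{E}$ and $\int_\Gamma$ by Fubini's theorem and pull the $\mathcal{F}_{k_0}$-measurable indicator $I_A$ outside, obtaining $\mathbb{E}\big[I_A\,\Xi_{k_0}\big]\geq 0$, where $\Xi_{k_0}:=\int_\Gamma \langle \partial_u \mathscr{H}_\gamma(k_0,x^*_\gamma,u^*,P_\gamma,Q_\gamma),\,v-u^*(k_0)\rangle\,\lambda^*(d\gamma)$. The random variable $\Xi_{k_0}$ is $\mathcal{F}_{k_0}$-measurable, since $x^*_\gamma(k_0)$, $u^*(k_0)$, $P_\gamma(k_0)$ and $Q_\gamma(k_0)$ are all $\mathcal{F}_{k_0}$-measurable (the latter two being conditional expectations given $\mathcal{F}_{k_0}$ in \eqref{A14}/\eqref{A15}), and integration in $\gamma$ preserves this measurability. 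Because $A\in\mathcal{F}_{k_0}$ is arbitrary, taking $A=\{\Xi_{k_0}<0\}$ forces $\mathbb{P}(\Xi_{k_0}<0)=0$, i.e. $\Xi_{k_0}\geq 0$, $d\mathbb{P}$-a.e. As $k_0$ and $v$ were arbitrary this is exactly \eqref{A19}; a single exceptional null set valid for every $v\in U_k$ can then be obtained from the continuity of $u\mapsto\partial_u\mathscr{H}_\gamma$ along a countable dense subset of $U_k$.

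The main obstacle I anticipate is justifying the Fubini interchange and the well-definedness and $\mathcal{F}_{k_0}$-measurability of $\Xi_{k_0}$: one must check that $(\omega,\gamma)\mapsto \langle \partial_u \mathscr{H}_\gamma(k_0,\ldots),\,v-u^*(k_0)\rangle$ is jointly measurable and integrable against $\mathbb{P}\otimes\lambda^*$. Measurability in $\gamma$ follows from the continuity of the coefficients and their derivatives in $\gamma$ under (H4), the continuous dependence of $x^*_\gamma$ from Lemma \ref{myw201}, and the explicit representation \eqref{A15}; the integrability is exactly the content of Remark \ref{myw304}, where the independence of $B$ and Hölder's inequality were used to control $\mathbb{E}[|P_\gamma(k)||u(k)|+|Q_\gamma(k)||u(k)|]$. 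With these facts in place the interchange and the localization go through routinely.
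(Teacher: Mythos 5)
Your overall route is exactly the paper's: pass from \eqref{A18} to \eqref{A19} by perturbing only at one time, $u(i)=u^*(i)$ for $i\neq k$ and $u(k)=vI_A+u^*(k)I_{A^c}$ with $A\in\mathcal{F}_k$, then interchange $\int_\Gamma$ and $\mathbb{E}$ by Fubini, pull out $I_A$, and conclude from the arbitrariness of $A$ (the paper phrases this last step through conditional expectations, you through the choice $A=\{\Xi_{k_0}<0\}$; the two are equivalent). The integrability half of the Fubini hypothesis is also handled as in the paper, via the uniform-in-$\gamma$ bounds of Remark \ref{myw304}.

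The gap is in the measurability half, which you assert ``follows from'' (H4), Lemma \ref{myw201} and \eqref{A15} and then ``goes through routinely''. This is precisely the part the paper cannot treat as routine, for two reasons. First, Lemma \ref{myw201} gives continuity in $\gamma$ of the state $x^*_\gamma$ only; the quantity $\partial_u\mathscr{H}^*_\gamma(k)$ also contains $P_\gamma(k)$ and $Q_\gamma(k)$, and their continuity in $\gamma$ (in the $L^1(\mathbb{P})$ sense of \eqref{A344}) is not inherited from Lemma \ref{myw201}: by \eqref{A15} they involve products $\prod_j M_\gamma(j)$ multiplied by $B(k+1)$, i.e.\ unbounded noise factors with no zero-mean/martingale cancellation available, so one needs the factor-by-factor estimates \eqref{A22}--\eqref{A27} (H\"older's inequality combined with the independence of $B$, applied to each telescoping difference of the products) to establish \eqref{A344}; the paper proves this in a dedicated lemma and explicitly flags it as the point where the discrete-time setting causes trouble. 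Second, even granted the $L^1$-continuity \eqref{A344}, continuity in $\gamma$ does not by itself produce a jointly $\mathcal{B}(\Gamma)\times\mathcal{F}_k$-measurable version: $P_\gamma(k)$ and $Q_\gamma(k)$ are conditional expectations, hence defined only up to $\mathbb{P}$-null sets for each fixed $\gamma$, so a version-selection or approximation argument is required. The paper supplies it via Prokhorov tightness of $\Lambda$, compact sets $S_\eta$, and a partition-of-unity approximation $(\partial_u\mathscr{H}^*_\gamma)_\eta(k)$ that is jointly measurable and converges in $L^1(\lambda^*\otimes\mathbb{P})$ thanks to \eqref{A344}. Your proof needs both ingredients spelled out (or an equivalent construction, e.g.\ using the independence of $B$ to realize the conditional expectations in \eqref{A15} as parameterized integrals over the law of $(B(k+1),\dots,B(N))$); as written, the hardest step of the theorem is asserted rather than proved.
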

\begin{proof}
It suffices to prove that \eqref{A19} can be derived from \eqref{A18}. For any $k \in \mathcal{T}$, $\mathbb{O}\in \mathcal{F}_{k}$ and $u\in U_k$,  taking $u(i)=u^*(i)$ when $i \neq k$ and $u(k)=uI_{\mathbb{O}}+u^*(k)I_{\mathbb{O}^c}$, then \eqref{A18} becomes
\begin{align}\label{A199}
\inf\limits_{u\in\mathcal{U}}\int_{\Gamma}\mathbb{E}\bigg[\left\langle \partial_u \mathscr{H}_{\gamma}(k,{x}^*_{\gamma}, {u}^*, P_{\gamma}, Q_{\gamma}), (u-{u}^*(k))\right\rangle I_{\mathbb{O}} \bigg]{\lambda}^*(d\gamma)\geq 0.
\end{align}
For shortness of notation, we set $\partial_u \mathscr{H}_{\gamma}^*(k)=\partial_u \mathscr{H}_{\gamma}(k,{x}^*_{\gamma}, {u}^*, P_{\gamma}, Q_{\gamma})$. Then, in order to get \eqref{A19}, we should exchange the order of integration in \eqref{A199}. For this purpose, we need to check that $\partial_u \mathscr{H}_{\gamma}^*(k)$ satisfies the conditions of Fubini theorem.

Firstly, we focus on the integrability. Note that
\begin{align*}
\begin{split}
\int_{\Gamma}\mathbb{E}\left[|\partial_u \mathscr{H}_{\gamma}^*(k)||u-{u}^*(k)|\right]{\lambda}^*(d\gamma)&\leq \sup\limits_{\gamma\in\Gamma}\mathbb{E}\bigg[|\partial_u f^*_{\gamma}(k)||u-{u}^*(k)|+|(P_{\gamma}(k))^{\top}||\partial_u b^*_{\gamma}(k)||u-{u}^*(k)|\\
&\ \ \ \ \ \ +\sum\limits_{i=1}^d|(Q^i_{\gamma}(k))^{\top}||\partial_u \sigma_{\gamma}^{*,i}(k)||u-{u}^*(k)|\bigg].
\end{split}
\end{align*}
According to (H2) and the estimate \eqref{A3}, we get
$$
\mathbb{E}\Big[|\partial_u f^*_{\gamma}(k)||{u}^*(k)|\Big]\leq C(L) \mathbb{E}\Big[1+|{x}^*_{\gamma}(k)|^2+|u^*(k)|^2\Big]^{\frac{1}{2}}\mathbb{E}\Big[|u^*(k)|^2\Big]^{\frac{1}{2}}\leq C(L, d, N,x_0).
$$
Since $\partial_u b^*_{\gamma}(k)$ and $\partial_u \sigma_{\gamma}^{*,i}(k)$ are $\mathcal{F}_k$-measurable and bounded, by a similar argument as in Remark \ref{myw304},  we derive that
$$
\mathbb{E}\bigg[|(P_{\gamma}(k))^{\top}||\partial_u b^*_{\gamma}(k)||{u}^*(k)|+\sum\limits_{i=1}^d|(Q^i_{\gamma}(k))^{\top}||\partial_u \sigma_{\gamma}^{*,i}(k)||{u}^*(k)|\bigg] \leq C(L, d, N,x_0).
$$

As for the measurability of $\partial_u \mathscr{H}_{\gamma}^*(k)$, we could construct a measurable sequence to approximate it following the procedure in the proof of Lemma 3.9 in \cite{HW}. To be specific, since $\Gamma$ is a Polish space, the tightness of $\Lambda$ follows from the Prokhorov theorem. Then, for each fixed $\eta>1$, we could find a compact set $S_{\eta}$ such that $\lambda^*(\gamma \in (S_{\eta})^c)\leq \frac{1}{\eta}$ and a sequence of open balls $\left\{B\left(\gamma_t,\frac{1}{2\eta}\right)\right\}_{t=1}^{T_{\eta}}$ such that $S_{\eta}\subset\cup_{t=1}^{T_{\eta}}B\left(\gamma_t,\frac{1}{2\eta}\right)$. By Partitions of unity theorem (see \cite{R1}), there exists a partition of unity $\{h_1, \cdots, h_{T_{\eta}}\}$ on $S_{\eta}$ subordinate to the cover. Choose $\gamma^*_t$ such that $h_t(\gamma^*_t)>0$ and define
\[
(\partial_u \mathscr{H}_{\gamma}^*)_{\eta}(k)=\sum\limits_{t=1}^{T_{\eta}}\partial_u \mathscr{H}_{\gamma^*_t}^*(k)h_t(\gamma)I_{\{\gamma\in S_{\eta}\}}.
\]
It is easy to verify that for any $k \in \mathcal{T}$, $(\partial_u \mathscr{H}_{\gamma}^*)_{\eta}(k)$ is $\mathcal{B}(\Gamma)\times\mathcal{F}_k$-measurable. It remains to show the convergence result when $\eta\rightarrow \infty$. From the analysis above, the diameter of support $dia(supp(h_t))\leq \frac{1}{\eta}$. Thus, if $h_t(\gamma)\neq 0$, then, $\widetilde{d}(\gamma^*_j,\gamma)\leq \frac{1}{\eta}$. Therefore, by simple calculation, we could get
\begin{small}
\begin{align*}
\begin{split}
&\int_{\Gamma}\mathbb{E}\left[\Big|(\partial_u \mathscr{H}_{\gamma}^*)_{\eta}(k)-\partial_u \mathscr{H}_{\gamma}^*(k)\Big|\right]{\lambda}^*(d\gamma)\leq \int_{\Gamma}\sum\limits_{t=1}^{T_{\eta}}\mathbb{E}\left[\Big|\partial_u \mathscr{H}_{\gamma^*_t}^*(k)-\partial_u \mathscr{H}_{\gamma}^*(k)\Big|\right]h_t(\gamma)I_{\{\gamma\in S_{\eta}\}} {\lambda}^*(d\gamma)\\
&\ \ \ +\int_{\Gamma}\mathbb{E}\left[\Big|\partial_u \mathscr{H}_{\gamma}^*(k)\Big|\right]I_{\{\gamma\in (S_{\eta})^c\}} {\lambda}^*(d\gamma) \leq \sup\limits_{\widetilde{d}(\gamma,\gamma^{\prime})\leq \frac{1}{\eta}}\mathbb{E}\left[\Big|\partial_u \mathscr{H}_{\gamma}^*(k)-\partial_u \mathscr{H}_{\gamma^{\prime}}^*(k)\Big|\right]+C(L, d, N,x_0)\frac{1}{\eta},
\end{split}
\end{align*}
\end{small}
where we have used the fact $\lambda^*(\gamma \in (S_{\eta})^c)\leq \frac{1}{\eta}$ in the last inequality. If we claim
\begin{align}\label{A344}
\lim\limits_{\varepsilon\rightarrow0}\sup\limits_{\widetilde{d}(\gamma,\gamma^{\prime})\leq \varepsilon}\mathbb{E}\bigg[\Big|\partial_u \mathscr{H}_{\gamma}^*(k)-\partial_u \mathscr{H}_{\gamma^{\prime}}^*(k)\Big|\bigg]=0,
\end{align}
which will be proved in the lemma below, then the measurability of $\partial_u \mathscr{H}_{\gamma}^*(k)$ holds.

Consequently, by Fubini theorem, we conclude that for any $k \in \mathcal{T}$, $\mathbb{O}\in \mathcal{F}_{k}$
\begin{align*}
\int_{\Gamma}\mathbb{E}\bigg[\left\langle \partial_u \mathscr{H}^*_{\gamma}(k), (u-{u}^*(k))\right\rangle I_{\mathbb{O}} \bigg]{\lambda}^*(d\gamma)=\mathbb{E}\bigg[\int_{\Gamma}\left\langle \partial_u \mathscr{H}^*_{\gamma}(k), (u-{u}^*(k))\right\rangle{\lambda}^*(d\gamma) I_{\mathbb{O}} \bigg]\geq0,
\end{align*}
which is equivalent to
\begin{align*}
\mathbb{E}\bigg[\int_{\Gamma}\left\langle \partial_u \mathscr{H}^*_{\gamma}(k), (u-{u}^*(k))\right\rangle{\lambda}^*(d\gamma)\mid \mathcal{F}_k \bigg]\geq0.
\end{align*}
Finally, \eqref{A19} comes immediately from the fact that $\int_{\Gamma}\left\langle \partial_u \mathscr{H}^*_{\gamma}(k), (u-{u}^*(k))\right\rangle{\lambda}^*(d\gamma)$ is $\mathcal{F}_k$-measurable.
\end{proof}

It is noteworthy that,  unlike the It\^{o}'s integral, the product terms involving the noise do not admits zero-mean property and this will cause trouble to the proof of \eqref{A344}. The deduction presented in the following lemma relies heavily on the explicit solution of the adjoint equation given in \eqref{A15} and the independence of $B$.
\begin{lemma}
Under assumptions (H1)-(H4), the claim \eqref{A344} holds.
\end{lemma}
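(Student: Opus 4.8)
The plan is to write the $u$-gradient of the Hamiltonian explicitly as
\[
\partial_u \mathscr{H}_{\gamma}^*(k)=\partial_u f^*_{\gamma}(k)+(\partial_u b^*_{\gamma}(k))^{\top}P_{\gamma}(k)+\sum_{i=1}^d(\partial_u \sigma^{*,i}_{\gamma}(k))^{\top}Q^i_{\gamma}(k),
\]
and to estimate $\mathbb{E}[|\partial_u \mathscr{H}_{\gamma}^*(k)-\partial_u \mathscr{H}_{\gamma^{\prime}}^*(k)|]$ by splitting it, via the triangle inequality, into the $f$-term, the $b$/$P$-term and the $\sigma$/$Q$-terms. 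For each product term I would add and subtract an intermediate quantity, e.g. $(\partial_u b^*_{\gamma}(k)-\partial_u b^*_{\gamma^{\prime}}(k))^{\top}P_{\gamma}(k)+(\partial_u b^*_{\gamma^{\prime}}(k))^{\top}(P_{\gamma}(k)-P_{\gamma^{\prime}}(k))$, so that the analysis reduces to two ingredients: (i) the continuity of the coefficient derivatives $\partial_u f,\partial_u b,\partial_u\sigma$, handled exactly as in Lemma \ref{myw201} and Lemma \ref{myw202} by combining (H3) for the state increment (controlled in $L^2$ by \eqref{A4}) with (H4) for the parameter increment, plus a truncation tail; and (ii) the $L^1$-continuity in $\gamma$ of the adjoint processes, namely $\sup_{\widetilde{d}(\gamma,\gamma^{\prime})\le\varepsilon}\mathbb{E}[|P_{\gamma}(k)-P_{\gamma^{\prime}}(k)|]\to0$ and the analogous statement for $Q_{\gamma}(k)$. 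The factor $P_{\gamma}(k)$ multiplying the small coefficient difference only needs to be bounded in $L^1$, which follows as in Remark \ref{myw304}.

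The core of the argument is ingredient (ii), for which I would use the explicit representation \eqref{A15}. Since conditional expectation is an $L^1$-contraction, it suffices to bound the $L^1$-norm of the difference of the random variables inside the conditional expectations in \eqref{A15}; this removes the conditioning and leaves differences of the finite sums $\sum_{n}(\prod_{j=k+1}^{n-1}M_{\gamma}(j)^{\top})(\partial_x f^*_{\gamma}(n))^{\top}$ (and their $B(k+1)^{\top}$-weighted analogues for $Q$). I would expand each such product difference telescopically, so that every resulting term carries exactly one difference factor, either $M_{\gamma}(j)-M_{\gamma^{\prime}}(j)$ or $\partial_x f^*_{\gamma}(n)-\partial_x f^*_{\gamma^{\prime}}(n)$, multiplied by a product of genuine (undifferenced) $M$-factors and one $\partial_x f$-factor.

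Each telescope term is then estimated by H\"older's inequality. The single difference factor is made small by the same mechanism as in (i): (H3) produces a modulus of continuity of the state increment $x^*_{\gamma}(j)-x^*_{\gamma^{\prime}}(j)$, which tends to $0$ in $L^2$ by \eqref{A4}, while (H4) contributes $\psi_R(\widetilde{d}(\gamma,\gamma^{\prime}))$ together with an indicator tail on $\{|x^*_{\gamma}|\ge R\}\cup\{|u^*|\ge R\}$ controlled by \eqref{A3}. The remaining undifferenced factors are bounded using the pointwise estimate $|M_{\gamma}(j)|\le L(1+|B(j+1)|)$ and the growth bound $|\partial_x f^*_{\gamma}(n)|\le L(1+|x^*_{\gamma}(n)|+|u^*(n)|)$ from (H2). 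Letting first $\varepsilon\to0$ and then $R\to\infty$ yields \eqref{A344}.

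The main obstacle is the moment bookkeeping forced by the weak integrability of the noise: $B$ is only assumed to have a finite $p$-th moment with $p\ge2$, while the products in \eqref{A15} stack up many noise factors. A naive H\"older split would require $P_{\gamma}(k),Q_{\gamma}(k)\in L^2$, which may fail when $p=2$. The remedy, exactly as in Remark \ref{myw304}, is to exploit the independence of $\{B(j)\}$: after bounding $|M_{\gamma}(j)|$ by a function of $B(j+1)$ alone, the expectation of the product of these independent factors splits into a product of one-step moments, so that no single noise factor is ever raised beyond the integrable exponent $p$, while the state and control factors absorb the available $q$-moment with $q>2$. Carrying out this factorisation so that every term simultaneously carries the small difference factor and stays within the admissible moment range is the delicate point of the proof.
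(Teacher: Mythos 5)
Your proposal is correct and follows essentially the same route as the paper's own proof: the same add-and-subtract splitting of $\partial_u \mathscr{H}_{\gamma}^*(k)-\partial_u \mathscr{H}_{\gamma^{\prime}}^*(k)$, continuity of the coefficient derivatives via (H3)/(H4) and truncation, and $L^1$-continuity of $(P_{\gamma},Q_{\gamma})$ obtained from the explicit representation \eqref{A15} by telescoping the product differences and combining H\"older's inequality with the pointwise bound $|M_{\gamma}(j)|\leq C(L)\big(1+|B(j+1)|\big)$ and the independence of $B$, exactly as in \eqref{A22}--\eqref{A27}. The one loose statement is that $P_{\gamma}(k)$ ``only needs to be bounded in $L^1$'': by itself this would not make $\mathbb{E}\big[|P_{\gamma^{\prime}}(k)|\,|\partial_u b^{*}_{\gamma}(k)-\partial_u b^{*}_{\gamma^{\prime}}(k)|\big]$ small, and what is actually needed is the Remark~\ref{myw304}-type factorisation you invoke later --- insert the explicit formula, use that the coefficient difference is $\mathcal{F}_k$-measurable, and split off $\prod_{j}\mathbb{E}\big[1+|B(j)|^{2}\big]^{1/2}$ by independence, as the paper does in \eqref{A22}--\eqref{A23}.
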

\begin{proof}
From the definition of Hamiltonian function, we obtain
\begin{align}\label{AA20}
\begin{split}
\Big|\partial_u \mathscr{H}_{\gamma}^*(k)-\partial_u \mathscr{H}_{\gamma^{\prime}}^*(k)\Big|&\leq C(L)|{P}_{\gamma}(k)-P_{\gamma^{\prime}}(k)|+|P_{\gamma^{\prime}}(k)||\partial_u b^*_{\gamma}(k)-\partial_u b^*_{\gamma^{\prime}}(k)|+|\partial_u f^*_{\gamma}(k)-\partial_u f^*_{\gamma^{\prime}}(k)|\\
&\ \ \ +C(L)|{Q}_{\gamma}(k)-Q_{\gamma^{\prime}}(k)|+|Q_{\gamma^{\prime}}(k)||\partial_u \sigma^*_{\gamma}(k)-\partial_u \sigma^*_{\gamma^{\prime}}(k)|.
\end{split}
\end{align}
By similar analysis as in appendix, we have
\begin{small}
\begin{align}\label{A21}
\begin{split}
\lim\limits_{\varepsilon\rightarrow0}\sup\limits_{\widetilde{d}(\gamma,\gamma^{\prime})\leq \varepsilon}\mathbb{E}\bigg[\sum_{k =0}^{N-1}\Big(|\partial_u b^*_{\gamma}(k)-\partial_u b^*_{\gamma^{\prime}}(k)|^4+|\partial_u \sigma^*_{\gamma}(k)-\partial_u \sigma^*_{\gamma^{\prime}}(k)|^4+|\partial_u f^*_{\gamma}(k)-\partial_u f^*_{\gamma^{\prime}}(k)|^2\Big)\bigg]=0.
\end{split}
\end{align}
\end{small}
Then, we only need to focus on the terms related to ${Q}$. Recalling \eqref{A15}, we could get for any $k \in \mathcal{T}$
\begin{align}\label{A22}
\begin{split}
&\mathbb{E}\bigg[|Q_{\gamma^{\prime}}(k)||\partial_u \sigma^*_{\gamma}(k)-\partial_u \sigma^*_{\gamma^{\prime}}(k)|\bigg]\leq \mathbb{E}\bigg[|\partial_x f^*_{\gamma^{\prime}}(k+1)||B(k+1)||\partial_u \sigma^*_{\gamma}(k)-\partial_u \sigma^*_{\gamma^{\prime}}(k)|\bigg]\\
&\ \ \ +\mathbb{E}\bigg[\Big(\sum\limits_{n=k+2}^N\big(\prod_{j=k+1}^{n-1}| M_{\gamma^{\prime}}(j)|\big) |\partial_x f^*_{\gamma^{\prime}}(n)|\Big)|B(k+1)||\partial_u \sigma^*_{\gamma}(k)-\partial_u \sigma^*_{\gamma^{\prime}}(k)| \bigg].
\end{split}
\end{align}
With the help of \eqref{A21}, H{\"o}lder's inequality together with the independence of $B$, it is easy to verify that the first term tends to zero when $\widetilde{d}(\gamma,\gamma^{\prime}) \rightarrow0$. Similarly, we derive that for any $k \in \mathcal{T}$
\begin{align}\label{A23}
\begin{split}
&\mathbb{E}\bigg[\big(\prod_{j=k+1}^{n-1}| M_{\gamma^{\prime}}(j)|\big) |\partial_x f^*_{\gamma^{\prime}}(n)||B(k+1)||\partial_u \sigma^*_{\gamma}(k)-\partial_u \sigma^*_{\gamma^{\prime}}(k)| \bigg] \\
&\leq \mathbb{E}\Big[|\partial_x f^*_{\gamma^{\prime}}(n)|^2\Big]^{\frac{1}{2}}\mathbb{E}\Big[\prod_{j=k+1}^{n-1} |M_{\gamma}(j)|^2|B(k+1)|^2|\partial_u \sigma^*_{\gamma}(k)-\partial_u \sigma^*_{\gamma^{\prime}}(k)|^2\Big]^{\frac{1}{2}}\\
&\leq C(L, d, N,x_0,u^*) \mathbb{E}\Big[\prod_{j=k+1}^{n} \big(1+|B(j)|^2\big)|\partial_u \sigma^*_{\gamma}(k)-\partial_u \sigma^*_{\gamma^{\prime}}(k)|^2\Big]^{\frac{1}{2}}\\
&\leq C(L, d, N,x_0,u^*) \prod_{j=k+1}^{n}  \mathbb{E}\Big[1+|B(j)|^2\Big]^{\frac{1}{2}}\mathbb{E}\Big[|\partial_u \sigma^*_{\gamma}(k)-\partial_u \sigma^*_{\gamma^{\prime}}(k)|^2\Big]^{\frac{1}{2}},
\end{split}
\end{align}
which implies the second term of \eqref{A22} tends to zero when $\widetilde{d}(\gamma,\gamma^{\prime}) \rightarrow0$. In view of \eqref{A15},
\begin{align}\label{A24}
\begin{split}
&\mathbb{E}\Big[|{Q}_{\gamma}(k)-Q_{\gamma^{\prime}}(k)|\Big]\leq \mathbb{E}\bigg[\Big|\Big((\partial_x f^*_{\gamma}(k+1))^{\top}-(\partial_x f^*_{\gamma^{\prime}}(k+1))^{\top}\Big)B(k+1)^{\top}\Big|\bigg]\\
&\ \ \ +\mathbb{E}\bigg[\Big|\Big(\sum\limits_{n=k+2}^N\big(\prod_{j=k+1}^{n-1} M_{\gamma}(j)^{\top}\big) (\partial_x f^*_{\gamma}(n))^{\top}-\sum\limits_{n=k+2}^N\big(\prod_{j=k+1}^{n-1} M_{\gamma^{\prime}}(j)^{\top}\big) (\partial_x f^*_{\gamma^{\prime}}(n))^{\top}\Big)B(k+1)^{\top}\Big|\bigg].
\end{split}
\end{align}
Proceeding identically as to derive \eqref{A23}, we have for any $k \in \mathcal{T}$
\begin{align}\label{A25}
\begin{split}
&\mathbb{E}\bigg[\Big|\Big(\prod_{j=k+1}^{n-1} M_{\gamma^{\prime}}(j)^{\top} (\partial_x f^*_{\gamma}(n))^{\top}-\prod_{j=k+1}^{n-1} M_{\gamma^{\prime}}(j)^{\top} (\partial_x f^*_{\gamma^{\prime}}(n))^{\top}\Big)B(k+1)^{\top}\Big|\bigg]\\
&=\mathbb{E}\bigg[\prod_{j=k+1}^{n-1}| M_{\gamma^{\prime}}(j)| \big|\partial_x f^*_{\gamma}(n)-\partial_x f^*_{\gamma^{\prime}}(n)\big||B(k+1)|\bigg]\\
&\leq \mathbb{E}\Big[\big|\partial_x f^*_{\gamma}(n)-\partial_x f^*_{\gamma^{\prime}}(n)\big|^2\Big]^{\frac{1}{2}}\mathbb{E}\Big[\prod_{j=k+1}^{n-1} |M_{\gamma}(j)|^2|B(k+1)|^2\Big]^{\frac{1}{2}}\\
&\leq \prod_{j=k+1}^{n}  \mathbb{E}\Big[1+|B(j)|^2\Big]^{\frac{1}{2}}\mathbb{E}\Big[\big|\partial_x f^*_{\gamma}(n)-\partial_x f^*_{\gamma^{\prime}}(n)\big|^2\Big]^{\frac{1}{2}}.
\end{split}
\end{align}
A direct computation yields that
$$
\Big|\prod_{j=k+1}^{n-1} M_{\gamma}(j)^{\top}-\prod_{j=k+1}^{n-1} M_{\gamma^{\prime}}(j)^{\top} \Big|=\Big|\sum_{j =k+1}^{n-1}\Big[(M_{\gamma}(j)^{\top}-M_{\gamma^{\prime}}(j)^{\top})\prod_{i=k+1}^{j-1}M_{\gamma^{\prime}}(i)^{\top}\prod_{l=j+1}^{n-1}M_{\gamma}(l)^{\top}\Big]\Big|,
$$
where $\prod_{i=k+1}^{k}[\cdot]=1$ and $\prod_{l=n}^{n-1}[\cdot]=1$. Using H{\"o}lder's inequality again, we could get
\begin{align}\label{A26}
\begin{split}
&\mathbb{E}\bigg[\big|M_{\gamma}(j)-M_{\gamma^{\prime}}(j)\big|\prod_{i=k+1}^{j-1}|M_{\gamma^{\prime}}(i)|\prod_{l=j+1}^{n-1}|M_{\gamma}(l)||\partial_x f^*_{\gamma}(n)||B(k+1)|\bigg]\\
&\leq \mathbb{E}\Big[|\partial_x f^*_{\gamma}(n)|^2\Big]^{\frac{1}{2}}\mathbb{E}\Big[\big|M_{\gamma}(j)-M_{\gamma^{\prime}}(j)\big|^2\prod_{i=k+1}^{j-1}|M_{\gamma^{\prime}}(i)|^2\prod_{l=j+1}^{n-1}|M_{\gamma}(l)|^2|B(k+1)|^2\Big]^{\frac{1}{2}}\\
&\leq C(L, d, N,x_0,u^*)\mathbb{E}\Big[\big|M_{\gamma}(j)-M_{\gamma^{\prime}}(j)\big|^4\Big]^{\frac{1}{4}}\mathbb{E}\Big[\prod_{i=k+1}^{j-1}|M_{\gamma^{\prime}}(i)|^4\prod_{l=j+1}^{n-1}|M_{\gamma}(l)|^4|B(k+1)|^4\Big]^{\frac{1}{4}}\\
&\leq C(L, d, N,x_0,u^*) \prod_{i=k+1}^{n}  \mathbb{E}\Big[1+|B(i)|^4\Big]^{\frac{1}{4}}\mathbb{E}\Big[\big|M_{\gamma}(j)-M_{\gamma^{\prime}}(j)\big|^4\Big]^{\frac{1}{4}}.
\end{split}
\end{align}
Then, in spirit of the definition of $M_{\gamma}(j)$ in \eqref{A8}, we obtain
\begin{footnotesize}
\begin{align}\label{A27}
\mathbb{E}\Big[\big|M_{\gamma}(j)-M_{\gamma^{\prime}}(j)\big|^4\Big]^{\frac{1}{4}}\leq C(d)\bigg\{ \mathbb{E}\Big[\big|\partial_x b^*_{\gamma}(j)-\partial_x b^*_{\gamma^{\prime}}(j)\big|^4\Big]^{\frac{1}{4}}+\sum\limits_{i=1}^d\mathbb{E}\Big[\big|\partial_x \sigma_{\gamma}^{*,i}(j)-\partial_x \sigma_{\gamma^{\prime}}^{*,i}(j)\big|^4\Big]^{\frac{1}{4}}\mathbb{E}\Big[|B^i(j+1)|^4\Big]^{\frac{1}{4}}\bigg\}.
\end{align}
\end{footnotesize}
Putting \eqref{A24}-\eqref{A27} and \eqref{A21} together, we could see that $\mathbb{E}\Big[|{Q}_{\gamma}(k)-Q_{\gamma^{\prime}}(k)|\Big]$ converges to zero as $\widetilde{d}(\gamma,\gamma^{\prime}) \rightarrow0$. Dealing with the terms related to $P$ in \eqref{AA20} identically, we finally conclude that \eqref{A344} holds.
\end{proof}

\subsection{Sufficient conditions for optimality}
The maximum principle gives a minimum qualification for the candidate optimal solution. It is natural for us to investigate whether the given control is indeed optimal. Thus, in this subsection, let us give the sufficient  conditions for optimality of problem $(\star)$.
\begin{theorem} \label{myw307}
Let (H1)-(H5) hold and suppose that the control ${u}^*=\{u^*(k),k \in \mathcal{T}\}$ and $\lambda^* \in \Lambda^{u^*}$ satisfy
\begin{align*}
\int_{\Gamma}\left\langle \partial_u \mathscr{H}_{\gamma}(k,{x}^*_{\gamma}, {u}^*, P_{\gamma}, Q_{\gamma}), (u-{u}^*(t))\right\rangle {\lambda}^*(d\gamma)\geq 0, \ \forall u\in U_k, \ \text{$d\mathbb{P}$-a.e.,}
\end{align*}
where ${x}^*_{\gamma}$ is the trajectory corresponding to $u^*$ and $( P_{\gamma}, Q_{\gamma})$ is the solution of \eqref{A14}. Further, we assume for any $\gamma \in \Gamma$,
\begin{description}
\item[(H6)] The function $\phi_{\gamma}$ is convex in $x$, and the Hamiltonian $\mathscr{H}_{\gamma}$ is convex with respect to $(x,u)$.
\end{description}
Then, $u^*$ is an optimal control of problem $(\star)$.
\end{theorem}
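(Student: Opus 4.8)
The plan is to prove directly that $J(u)\ge J(u^*)$ for every $u\in\mathcal U$, taking advantage of the fact that $\lambda^*$ is a single measure serving all competitors. Because $\lambda^*\in\Lambda^{u^*}$ we have the exact identity $J(u^*)=\int_\Gamma y^*_\gamma(0)\lambda^*(d\gamma)$, whereas the definition \eqref{A1} of the cost gives only the one-sided bound $J(u)\ge\int_\Gamma y_\gamma(0)\lambda^*(d\gamma)$, since the supremum dominates any fixed element of $\Lambda$. Subtracting, it suffices to show $\int_\Gamma\big(y_\gamma(0)-y^*_\gamma(0)\big)\lambda^*(d\gamma)\ge0$, which reduces the ``inf sup'' obstacle to a family of classical single-objective estimates indexed by $\gamma$; this is the only place where (H5) and the robust structure enter.

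Next I would fix $\gamma$ and bound $y_\gamma(0)-y^*_\gamma(0)$ from below by convexity. Write $\delta x_\gamma(k)=x_\gamma(k)-x^*_\gamma(k)$, so that $\delta x_\gamma(0)=0$ and $\delta x_\gamma(k+1)=\Delta b_\gamma(k)+\sum_{i=1}^d\Delta\sigma^i_\gamma(k)B^i(k+1)$, where $\Delta b_\gamma(k)=b_\gamma(k,x_\gamma(k),u(k))-b^*_\gamma(k)$ and $\Delta\sigma^i_\gamma(k)$ is defined analogously. Using $f_\gamma=\mathscr{H}_\gamma-\langle P_\gamma,b_\gamma\rangle-\sum_{i=1}^d\langle Q^i_\gamma,\sigma^i_\gamma\rangle$ to rewrite the running cost, the convexity of $\phi_\gamma$ and of $\mathscr{H}_\gamma$ in $(x,u)$ from (H6) yields, after taking expectations,
\[
y_\gamma(0)-y^*_\gamma(0)\ge\mathbb E\Big[\sum_{k=0}^{N-1}\big(\partial_x\mathscr{H}^*_\gamma(k)\delta x_\gamma(k)+\partial_u\mathscr{H}^*_\gamma(k)(u(k)-u^*(k))\big)+\partial_x\phi^*_\gamma(N)\delta x_\gamma(N)\Big]-\mathbb E\Big[\sum_{k=0}^{N-1}\Big(\langle P_\gamma(k),\Delta b_\gamma(k)\rangle+\sum_{i=1}^d\langle Q^i_\gamma(k),\Delta\sigma^i_\gamma(k)\rangle\Big)\Big],
\]
where $\partial_x\mathscr{H}^*_\gamma(k)$ denotes the gradient of the Hamiltonian along the optimal pair.

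The crux is a duality identity for the genuine state difference $\delta x_\gamma$, the exact analogue of \eqref{A16}--\eqref{A177} but carrying the true increments $\Delta b_\gamma,\Delta\sigma^i_\gamma$ in place of the linearized ones. Observing that $\partial_x\mathscr{H}^*_\gamma(k)$ coincides with the row vector $\Delta_\gamma(k)$ of \eqref{A17} for $0\le k\le N-1$ and that $\delta x_\gamma(0)=0$, the first bracket reindexes to $\sum_{k=0}^{N-1}\mathbb E[\Delta_\gamma(k+1)\delta x_\gamma(k+1)]$, where at $k=N-1$ the factor $\Delta_\gamma(N)$ is read as $\partial_x\phi^*_\gamma(N)$ through the terminal condition of \eqref{A14}. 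Since $\Delta b_\gamma(k),\Delta\sigma^i_\gamma(k)$ are $\mathcal F_k$-measurable and \eqref{A14} states precisely $P_\gamma(k)=\mathbb E[\Delta_\gamma(k+1)^\top\mid\mathcal F_k]$ and $Q^i_\gamma(k)=\mathbb E[\Delta_\gamma(k+1)^\top B^i(k+1)\mid\mathcal F_k]$, with the analogous terminal statements at $k=N-1$, conditioning on $\mathcal F_k$ and using the independence of $B$ gives $\mathbb E[\Delta_\gamma(k+1)\delta x_\gamma(k+1)\mid\mathcal F_k]=\langle P_\gamma(k),\Delta b_\gamma(k)\rangle+\sum_{i=1}^d\langle Q^i_\gamma(k),\Delta\sigma^i_\gamma(k)\rangle$. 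Thus the first expectation cancels the last one exactly, and only $\mathbb E[\sum_k\partial_u\mathscr{H}^*_\gamma(k)(u(k)-u^*(k))]$ survives. I expect this cancellation, together with the integrability needed to interchange the conditional expectations term by term (supplied by Remark \ref{myw304}), to be the main technical step.

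Finally, integrating the surviving bound against $\lambda^*$ and interchanging $\int_\Gamma\cdot\,\lambda^*(d\gamma)$ with $\mathbb E$ by the Fubini argument already validated in Theorem \ref{myw306}, I obtain $\int_\Gamma(y_\gamma(0)-y^*_\gamma(0))\lambda^*(d\gamma)\ge\mathbb E\big[\sum_k\int_\Gamma\langle\partial_u\mathscr{H}^*_\gamma(k),u(k)-u^*(k)\rangle\lambda^*(d\gamma)\big]$. The hypothesis of the theorem holds for every $u\in U_k$ $d\mathbb P$-a.e.; since the left member is affine, hence continuous, in the test point $u$, it extends over a countable dense subset of $U_k$ to a single null set, so it may be applied to the $\mathcal F_k$-measurable admissible value $u(k)\in U_k$, making each integrand nonnegative. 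Hence the right-hand side is $\ge0$, giving $J(u)\ge J(u^*)$ for arbitrary $u\in\mathcal U$, i.e. $u^*$ is optimal.
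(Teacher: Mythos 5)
Your proposal is correct and follows essentially the same route as the paper's proof: reduction to the fixed measure $\lambda^*$ via $\lambda^*\in\Lambda^{u^*}$, convexity of $\phi_\gamma$ and $\mathscr{H}_\gamma$ from (H6), and the duality identity between $(P_\gamma,Q_\gamma)$ and the true state difference obtained by conditioning on $\mathcal{F}_k$ and exploiting the adjoint equation \eqref{A14} together with the $\mathcal{F}_k$-measurability of the increments and the independence of $B$. The differences are only presentational: you apply convexity of $\mathscr{H}_\gamma$ before the duality cancellation whereas the paper does the reverse (its equations \eqref{A29}--\eqref{A30} recombine to exactly your identity with the true increments $\Delta b_\gamma,\Delta\sigma^i_\gamma$), and you make explicit two details the paper leaves implicit, namely the Fubini interchange and the countable-dense-subset continuity argument for inserting the random value $u(k)$ into the pointwise hypothesis.
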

\begin{proof}
For convenience, we denote $ b_{\gamma}(k)=b_{\gamma}\left(k, x_{\gamma}(k), u(k)\right)$, and similarly for the other functions. Set $\hat{x}_{\gamma}(k)={x}_{\gamma}(k)-{x}^*_{\gamma}(k)$ for any $k \in \mathcal{T}^{\prime\prime}$. Then, from \eqref{A2}, we know that
\begin{align*}
\left\{\begin{array}{l}
\hat{x}_{\gamma}(k+1)=\partial_x b^*_{\gamma}(k)\hat{x}_{\gamma}(k)+\Big(b_{\gamma}(k)-b^*_{\gamma}(k)-\partial_x b^*_{\gamma}(k)\hat{x}_{\gamma}(k)\Big)+\partial_x \sigma^*_{\gamma}(k)\hat{x}_{\gamma}(k)B^i(k+1)\\
\ \ \ \ \ \ \ \ \ \ \ \ \ \ \ \ \ \ +\sum\limits_{i=1}^d\Big(\sigma^i_{\gamma}(k)-\sigma^{*,i}_{\gamma}(k)-\partial_x \sigma^{*,i}_{\gamma}(k)\hat{x}_{\gamma}(k)\Big)B^i(k+1) \\
\hat{x}_{\gamma}(0)=0.
\end{array}\right.
\end{align*}
Noting that $\lambda^* \in \Lambda^{u^*}$, then, for any $u\in\mathcal{U}$, it follows from the convexity of $\phi_{\gamma}$ in (H6) that
\begin{small}
\begin{align}\label{A299}
\begin{split}
J(u)-J(u^*)&\geq \int_{\Gamma}\Big(y_{\gamma}(0)-y_{\gamma}^*(0)\Big){\lambda}^*(d\gamma)=\int_{\Gamma}\mathbb{E}\Big[\sum_{k =0}^{N-1}\Big(f_{\gamma}(k)-f_{\gamma}^*(k)\Big)+\phi_{\gamma}(N)-\phi_{\gamma}^*(N)\Big]{\lambda}^*(d\gamma)\\
&\geq \int_{\Gamma}\mathbb{E}\Big[\sum_{k =0}^{N-1}\Big(f_{\gamma}(k)-f_{\gamma}^*(k)\Big)+\partial_x \phi_{\gamma}^*(N)\Big(x_{\gamma}(N)-x_{\gamma}^*(N)\Big)\Big]{\lambda}^*(d\gamma).
\end{split}
\end{align}
\end{small}
Proceeding identically as to obtain \eqref{A16}, we could derive that
\begin{small}
\begin{align}\label{A29}
\begin{split}
\mathbb{E}\Big[\partial_x \phi^*_{\gamma}(N)\hat{x}_{\gamma}(N)\mid \mathcal{F}_{N-1}\Big]&=\Big((P_{\gamma}(N-1))^{\top}\partial_x b^*_{\gamma}(N-1)+\sum\limits_{i=1}^d(Q^i_{\gamma}(N-1))^{\top}\partial_x \sigma_{\gamma}^{*,i}(N-1)\Big)\hat{x}_{\gamma}(N-1)\\
&+(P_{\gamma}(N-1))^{\top}\Big(b_{\gamma}(N-1)-b^*_{\gamma}(N-1)-\partial_x b^*_{\gamma}(N-1)\hat{x}_{\gamma}(N-1)\Big)\\
&+\sum\limits_{i=1}^d(Q^i_{\gamma}(N-1))^{\top}\Big(\sigma^i_{\gamma}(N-1)-\sigma^{*,i}_{\gamma}(N-1)-\partial_x \sigma^{*,i}_{\gamma}(N-1)\hat{x}_{\gamma}(N-1)\Big),
\end{split}
\end{align}
\end{small}
and moreover $k=0, \ldots, N-2$
\begin{small}
\begin{align}\label{A30}
\begin{split}
&\mathbb{E}\bigg[\partial_x \mathscr{H}_{\gamma}^*(k+1)\hat{x}_{\gamma}(k+1)\Big| \mathcal{F}_k\bigg]=\Big((P_{\gamma}(k))^{\top}\partial_x b^*_{\gamma}(k)+\sum\limits_{i=1}^d(Q^i_{\gamma}(k))^{\top}\partial_x \sigma_{\gamma}^{*,i}(k)\Big)\hat{x}_{\gamma}(k)\\
&\ \ \ +(P_{\gamma}(k))^{\top}\Big(b_{\gamma}(k)-b^*_{\gamma}(k)-\partial_x b^*_{\gamma}(k)\hat{x}_{\gamma}(k)\Big)+\sum\limits_{i=1}^d(Q^i_{\gamma}(k))^{\top}\Big(\sigma^i_{\gamma}(k)-\sigma^{*,i}_{\gamma}(k)-\partial_x \sigma^{*,i}_{\gamma}(k)\hat{x}_{\gamma}(k)\Big).
\end{split}
\end{align}
\end{small}
Combining \eqref{A29} and \eqref{A30} indicates that
\begin{align*}
\begin{split}
\mathbb{E}\Big[\sum_{k =0}^{N-1}\partial_x f^*_{\gamma}(k)\hat{x}_{\gamma}(k)&+\partial_x \phi^*_{\gamma}(N)\hat{x}_{\gamma}(N)\Big]=\mathbb{E}\bigg[\sum_{k =0}^{N-1}\Big[\Big(\mathscr{H}_{\gamma}(k)-\mathscr{H}_{\gamma}^*(k)\Big)-\Big(f_{\gamma}(k)-f_{\gamma}^*(k)\Big)\\
& -\Big((P_{\gamma}(k))^{\top}\partial_x b^*_{\gamma}(k)+\sum\limits_{i=1}^d(Q^i_{\gamma}(k))^{\top}\partial_x \sigma_{\gamma}^{*,i}(k)\Big)\hat{x}_{\gamma}(k)\Big]\bigg].
\end{split}
\end{align*}
That is
\begin{align*}
\begin{split}
\mathbb{E}\Big[\sum_{k =0}^{N-1}\Big(f_{\gamma}(k)-f_{\gamma}^*(k)\Big)+\partial_x \phi^*_{\gamma}(N)\hat{x}_{\gamma}(N)\Big]=\mathbb{E}\bigg[\sum_{k =0}^{N-1}\Big[\Big(\mathscr{H}_{\gamma}(k)-\mathscr{H}_{\gamma}^*(k)\Big)-\partial_x \mathscr{H}_{\gamma}^*(k)\hat{x}_{\gamma}(k)\Big]\bigg],
\end{split}
\end{align*}
which together with \eqref{A299} and the convexity of $\mathscr{H}_{\gamma}$ in (H6) give
\begin{align*}
\begin{split}
J(u)-J(u^*) &\geq \int_{\Gamma} \mathbb{E}\bigg[\sum_{k =0}^{N-1}\Big[\Big(\mathscr{H}_{\gamma}(k)-\mathscr{H}_{\gamma}^*(k)\Big)-\partial_x \mathscr{H}_{\gamma}^*(k)\hat{x}_{\gamma}(k)\Big]\bigg]{\lambda}^*(d\gamma)\\
&\geq \int_{\Gamma}\mathbb{E}\Big[\sum_{k =0}^{N-1}\partial_u \mathscr{H}_{\gamma}^*(k)\big(u(k)-{u}^*(k)\big)dt\Big]{\lambda}^*(d\gamma)\geq 0.
\end{split}
\end{align*}
The proof is complete.
\end{proof}
\section{Discrete-time robust investment problem}
In what follows, we shall apply the maximum principle obtained to deal with a robust investment problem. Assume that there is a market with a bond and $m$ risky stocks treated discontinuously under different market conditions $\gamma\in \Gamma$. Their prices are subject to the following equations for any $k \in \mathcal{T}$:
\begin{align*}
\begin{cases}
&S^0(k+1)-S^0(k)=e(t)S^0(k), \ S^0(0)>0,\\
&S_{\gamma}^i(k+1)-S_{\gamma}^i(k)=\mu^i_{\gamma}(k)S_{\gamma}^i(k)+\sum\limits_{j=1}^{d}\beta^{ij}_{\gamma}(k)S_{\gamma}^i(k)B(k+1),\ S^i_{\gamma}(0)>0,\ i=1,\cdots,m,
\end{cases}
\end{align*}
where ${\mathbb{E}}[B(k+1)\mid \mathcal{F}_{k}]=0$, ${\mathbb{E}}[B(k+1)^2\mid \mathcal{F}_{k}]=1$. The interest rate $e(k)>0$ is a bounded deterministic function and $\mu^i_{\gamma}(k), \beta^{ij}_{\gamma}(k)$ are bounded random variables.  We denote the investor's total wealth at time $k$ by $x_{\gamma}(k)$ and note that it has dynamics given by
\begin{align}\label{A31}
\begin{cases}
&x_{\gamma}(k+1)=\Big(1+e(k)\Big)x_{\gamma}(k)+\Big(\mu_{\gamma}(k)-e(k)1_m\Big)^{\top}u(k)+\sum\limits_{j=1}^{d}(\beta^j_{\gamma}(k))^{\top}u(k)B(k+1)\\
&x_{\gamma}(0)=x(0)>0,
\end{cases}
\end{align}
where $\mu_{\gamma}(k)=(\mu^1_{\gamma}(k),\cdots,\mu^m_{\gamma}(k))^{\top}$, $\beta^j_{\gamma}(k)=(\beta^{1j}_{\gamma}(k),\cdots,\beta^{mj}_{\gamma}(k))^{\top}$, $1_m=(1, \cdots, 1)^{\top}$ and  $u(k)=(u^1(k),\cdots, u^m(k))^{\top}$ stands for a portfolio of the investor  at time $k$. Notably, $\alpha_{\gamma}(t)$ and $\beta^{ij}_{\gamma}(t)$ also should be uniformly continuous with respect to $\gamma$ to ensure (H4) holds. The objective of the investor is to find an admissible portfolio ${u}=\{u(k),k \in \mathcal{T}\}$ which minimizes the difference from a given benchmark $\Psi=\{\Psi(k),k \in \mathcal{T}\}$ and meanwhile maximizes the expected terminal wealth under the worst scenario. That is to solve the following robust single objective optimal control problem:
$$
\left\{\begin{array}{cl}
\text { Minimize } & J(u)=\sup _{\lambda \in \Lambda} \int_{\Gamma}  {\mathbb{E}}\left[\frac{1}{2}\sum_{k=0}^{N-1}\mathbb{G}_{\gamma}(k)\Big(u(k)-\Psi(k)\Big)^2-\mathbb{H}_{\gamma}x_{\gamma}(N) \right] \lambda (d\gamma)\\
\text { subject to } & u \in \mathcal{U} \text { and } (x_{\gamma}(\cdot),u(\cdot)) \text { satisfy } \eqref{A31},
\end{array}\right.
\ \ \ \ \ (\star\star)$$
where $\Psi(k)$ and $\mathbb{G}_{\gamma}(k)\gg 0$ are bounded random variables and $\mathbb{H}_{\gamma}$ is a positive constant. In this case, the Hamiltonian reduces to
\begin{align*}
\mathscr{H}_{\gamma}(k,x,u,P_{\gamma},Q_{\gamma})=\Big(\Big(1+e(k)\Big)x+(\mathbb{A}_{\gamma}(k))^{\top}u\Big)P_{\gamma}+\sum\limits_{j=1}^{d}(\beta^j_{\gamma}(k))^{\top}uQ^j_{\gamma}+\frac{1}{2}\mathbb{G}_{\gamma}(k)\Big(u-\Psi(k)\Big)^2,
\end{align*}
where $\mathbb{A}_{\gamma}(k)=\mu_{\gamma}(k)-e(k)1_m$ and $(P_{\gamma},Q_{\gamma})$ satisfies
\begin{align*}
\begin{cases}
&P_{\gamma}(k)=\mathbb{E}\left[\Big(1+e(k+1)\Big)P_{\gamma}(k+1)\mid \mathcal{F}_k\right],\ P_{\gamma}(N-1)=\mathbb{E}\left[-\mathbb{H}_{\gamma}\mid \mathcal{F}_{N-1}\right],\\
&Q_{\gamma}(k)=\mathbb{E}\left[\Big(1+e(k+1)\Big)P_{\gamma}(k+1)B(k+1)^{\top}\mid \mathcal{F}_k\right],\ Q_{\gamma}(N-1)=\mathbb{E}\left[-\mathbb{H}_{\gamma}B(N)^{\top}\mid \mathcal{F}_{N-1}\right].\\
\end{cases}
\end{align*}

In order to give an explicit expression for the optimal solution to this problem, in what follows, we only focus on the simplest case where $\Gamma=\{1,2\}$. For instance, the stock market is either a bull one ($\gamma=1$) or a bear one ($\gamma=2$) and the coefficients of stock prices are definitely different in these two cases. Suppose that the probability of the market being in a bull one is $\theta$. It is very difficult to predict the market situation, so the specific value of $\theta$ is always unknown. Then, the related probability distributions set is $\Lambda=\{\lambda^{\theta}:\ \theta\in[0,1]\}$, where $\lambda^\theta$ represents a probability measure such that $\lambda^{\theta}(\{1\})=\theta$ and ${\lambda}^\theta(\{2\})=1-\theta$. From Theorem \ref{myw306} and the convexity of $\mathscr{H}_{\gamma}$, there exists a probability measure $\lambda^{\theta^*} \in \Lambda^{u^*}$ such that
\begin{align*}
\int_{\Gamma}\partial_u \mathscr{H}_{\gamma}(k,{x}^*_{\gamma}, {u}^*, P_{\gamma}, Q_{\gamma}) \lambda^{\theta^*}(d\gamma)=0, \ \text{$d\mathbb{P}$-a.e.,}
\end{align*}
that is
\begin{align}\label{A32}
J(u^*)=\sup\limits_{\theta\in[0,1]}\left(\theta y_1^*(0)+(1-\theta)y_2^*(0)\right)=\max(y_1^*(0),y_2^*(0))=\theta^*y_1^*(0)+(1-\theta^*)y_2^*(0),
\end{align}
where $y_{\gamma}(0)={\mathbb{E}}[\frac{1}{2}\sum_{k=0}^{N-1}\mathbb{G}_{\gamma}(k)(u(k)-\Psi(k))^2-\mathbb{H}_{\gamma}x_{\gamma}(N) ] $ and what's more
\begin{align}\label{A33}
\begin{split}
\theta^*&\partial_u \mathscr{H}^*_{1}(k)+(1-\theta^*)\partial_u \mathscr{H}^*_{2}(k)=\theta^*\mathbb{A}_{1}(k)P_1(k)+\theta^*\sum\limits_{j=1}^{d}\beta^j_1(t)Q^j_1(t)+\theta^*\mathbb{G}_{1}(k)\Big(u^*(k)-\Psi(k)\Big)\\
& +(1-\theta^*)\mathbb{A}_{2}(k)P_2(k)+(1-\theta^*)\sum\limits_{j=1}^{d}\beta^j_2(t)Q^j_2(t)+(1-\theta^*)\mathbb{G}_{2}(k)\Big(u^*(k)-\Psi(k)\Big)=0.
\end{split}
\end{align}
Set $\mathbb{G}^{\theta^*}=\theta^*\mathbb{G}_1+(1-\theta^*)\mathbb{G}_2$, $x=\left[
\begin{array}
[c]{cc}%
x_1\\
x_2
\end{array}
\right]$, $\mathbb{A}=\left[
\begin{array}
[c]{cc}%
\mathbb{A}_1 & \mathbb{A}_2
\end{array}
\right]$, $\beta^j=\left[
\begin{array}
[c]{cc}%
\beta^j_1 & \beta^j_2
\end{array}
\right]$, $ \mathbb{H}=\left[
\begin{array}
[c]{cc}%
\mathbb{H}_1\\
\mathbb{H}_2
\end{array}
\right]$,
$
P=\left[
\begin{array}
[c]{cc}%
P_1\\
P_2
\end{array}
\right]$, $Q=\left[
\begin{array}
[c]{cc}%
Q_1\\
Q_2
\end{array}
\right]$ and $\Theta^*=\left[
\begin{array}
[c]{cc}%
\theta^* & 0 \\
0 & (1-\theta^*)
\end{array}
\right]$.
Then \eqref{A33} can be rewritten as
\begin{align}\label{A34}
\mathbb{A}(k)\Theta^*P(k)+\sum\limits_{j=1}^{d}\beta^j(t)\Theta^*Q(k)+\mathbb{G}^{\theta^*}(k)\Big(u^*(k)-\Psi(k)\Big)=0,
\end{align}
where $P(k)$ solves
\begin{align}\label{A35}
\begin{cases}
&P(k)=\mathbb{E}\left[\Big(1+e(k+1)\Big)P(k+1)\mid \mathcal{F}_k\right],\ P(N-1)=\mathbb{E}\left[-\mathbb{H}\mid \mathcal{F}_{N-1}\right],\\
&Q(k)=\mathbb{E}\left[\Big(1+e(k+1)\Big)P(k+1)B(k+1)^{\top}\mid \mathcal{F}_k\right],\ Q(N-1)=\mathbb{E}\left[-\mathbb{H}B(N)^{\top}\mid \mathcal{F}_{N-1}\right].
\end{cases}
\end{align}
And \eqref{A31} turns to
\begin{align}\label{A36}
x(k+1)=\Big(1+e(k)\Big)x(k)+\mathbb{A}(k)^{\top}u(k)+\sum\limits_{j=1}^{d}\beta^j(k)^{\top}u(k)B(k+1),\ k \in \mathcal{T}.
\end{align}
Indeed, we could write out the explicit solution of \eqref{A35}:
$$
P(k)=\begin{cases}-\mathbb{H}\prod\limits_{i=k+1}^{N-1}\left(1+ e(i)\right), & \text { if } 0 \leq k \leq N-2 \\ -\mathbb{H}, & \text { if } k=N-1\end{cases},\ Q(k)=\mathbf{0},\ \forall k \in \mathcal{T},
$$
which together with \eqref{A34} gives
\begin{align}\label{A37}
u^*(k)=\Psi(k)+(\mathbb{G}^{\theta^*}(k))^{-1}\mathbb{A}(k)\Theta^*\mathbb{H}\prod\limits_{i=k+1}^{N-1}\left(1+ e(i)\right),\ k \in \mathcal{T}.
\end{align}
\begin{theorem}\label{myw401}
There exist a constant $\theta^*$ and an admissible control $u^*$ satisfying \eqref{A32} and \eqref{A34}. Moreover, the $u^*$ defined by \eqref{A37} is an optimal portfolio for the robust investment problem $(\star\star)$.

\end{theorem}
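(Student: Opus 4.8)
The plan is to treat $\theta^*$ as a free parameter, exhibit the candidate control \eqref{A37} as an explicit function of $\theta^*$, and then pin down a consistent value of $\theta^*$ by a continuity–plus–classification argument before invoking the sufficient condition of Theorem \ref{myw307}. First I would record that, because the wealth dynamics \eqref{A31} and the terminal cost are affine in $x$ with coefficients independent of $x$, the adjoint system \eqref{A35} decouples from the state and control: its solution is the explicit, $\theta^*$-free pair $P(k)=-\mathbb{H}\prod_{i=k+1}^{N-1}(1+e(i))$, $Q(k)=\mathbf{0}$. Consequently, for every $\theta\in[0,1]$ the first-order relation \eqref{A34} forces the unique candidate $u^*_\theta$ given by \eqref{A37}, where $\mathbb{G}^{\theta}=\theta\mathbb{G}_1+(1-\theta)\mathbb{G}_2$ is uniformly positive definite (a convex combination of positive definite bounded matrices), hence invertible with bounded inverse. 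I would check that each $u^*_\theta$ is admissible: it is $\mathcal{F}_k$-measurable and bounded, hence $q$-integrable, because $\Psi$, $\mathbb{A}$, $\Theta$, $\mathbb{H}$, $e$ are bounded and $U_k=\mathbb{R}^m$ is convex.

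Next I would make the dependence on $\theta$ explicit at the level of the cost. Solving \eqref{A31} forward with control $u^*_\theta$ and taking expectations — using $\mathbb{E}[B(k+1)\mid\mathcal{F}_k]=0$ so that the martingale increments drop out — expresses
\[
y_\gamma^{\theta}(0)=\mathbb{E}\Big[\tfrac12\textstyle\sum_{k=0}^{N-1}\mathbb{G}_\gamma(k)\big(u^*_\theta(k)-\Psi(k)\big)^2-\mathbb{H}_\gamma x_\gamma^{\theta}(N)\Big]
\]
as an explicit rational function of $\theta$; in particular $\theta\mapsto y_1^{\theta}(0)$ and $\theta\mapsto y_2^{\theta}(0)$ are continuous, the only candidate $\theta$-singularity being excluded by the uniform invertibility of $\mathbb{G}^{\theta}$. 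The self-consistency requirement \eqref{A32} then reads $\max\!\big(y_1^{\theta^*}(0),y_2^{\theta^*}(0)\big)=\theta^*y_1^{\theta^*}(0)+(1-\theta^*)y_2^{\theta^*}(0)$.

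The heart of the argument — and the step I expect to be the main obstacle — is solving this self-referential equation, since $u^*$ (and hence both values $y_\gamma^*(0)$) already depends on the very $\theta^*$ we are trying to characterise. I would resolve it by a classification on the continuous function $h(\theta):=y_1^{\theta}(0)-y_2^{\theta}(0)$. If $h(1)\geq0$, then $\theta^*=1$ satisfies \eqref{A32}; if $h(0)\leq0$, then $\theta^*=0$ does; and in the remaining case $h(0)>0>h(1)$ the intermediate value theorem produces an interior $\theta^*\in(0,1)$ with $h(\theta^*)=0$, at which $y_1^{\theta^*}(0)=y_2^{\theta^*}(0)$ and \eqref{A32} again holds for every weight, in particular for $\theta^*$. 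These three cases are exhaustive, so a consistent $\theta^*$, together with the corresponding admissible control $u^*$ of \eqref{A37} fulfilling \eqref{A34}, always exists; this establishes the first assertion.

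Finally I would verify optimality via Theorem \ref{myw307}. The identity \eqref{A33}–\eqref{A34} gives $\int_{\Gamma}\partial_u\mathscr{H}_\gamma(k,x_\gamma^*,u^*,P_\gamma,Q_\gamma)\,\lambda^{\theta^*}(d\gamma)=0$, so the required variational inequality holds with equality, while \eqref{A32} is precisely the statement $\lambda^{\theta^*}\in\Lambda^{u^*}$. Assumption (H6) is met here because $\phi_\gamma(x)=-\mathbb{H}_\gamma x$ is affine, hence convex, and $\mathscr{H}_\gamma$ is affine in $x$ and, thanks to $\mathbb{G}_\gamma\gg0$, strictly convex in $u$, so it is jointly convex in $(x,u)$. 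Theorem \ref{myw307} then yields that the $u^*$ defined by \eqref{A37} is optimal for the robust investment problem $(\star\star)$, completing the proof.
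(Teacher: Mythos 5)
Your proposal is correct and follows essentially the same route as the paper: leave $\theta^*$ undetermined, observe that the adjoint pair $(P,Q)$ is explicit and $\theta$-free so that \eqref{A34} forces \eqref{A37} for every $\theta\in[0,1]$, split into the same three cases, and settle the interior case by the intermediate value theorem applied to $h(\theta)=y_1^{\theta}(0)-y_2^{\theta}(0)$. The differences are only presentational: the paper substantiates the continuity of $\theta\mapsto y_\gamma^{\theta}(0)$ by explicit Lipschitz estimates on $u^*(\cdot,\theta)$ (via the resolvent identity for $(\mathbb{G}^{\theta})^{-1}$ and the state estimate), where you appeal to uniform invertibility of $\mathbb{G}^{\theta}$ and pathwise rationality in $\theta$, and you spell out the appeal to the sufficiency Theorem \ref{myw307} together with the verification of (H6), which the paper leaves implicit.
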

\begin{proof}
Let $\theta^*$ be underdetermined first. From the arguments above, \eqref{A34} holds for any $u^*(k,\theta^*)$, $\theta^*\in [0,1]$ defined by \eqref{A37}. Next, we will discuss whether there exists $\theta^*$ satisfying \eqref{A32}  in three situations to further verify the optimality of $\{u^*(k,\theta^*),k \in \mathcal{T}\}$.

{\bf Case 1:} If
$
y^*_1(0,1)\geq y^*_2(0,1),
$
the above robust  investment  problem reduces to a classical one and $(1,u^*(k,1))$ is indeed the optimal solution.

{\bf Case 2:} If $
y^*_1(0,0)\leq y^*_2(0,0),
$ similarly, one could check that $(0,u^*(k,0))$ solves \eqref{A32} and \eqref{A34}, which implies its optimality.

{\bf Case 3:} If $y^*_1(0,1)< y^*_2(0,1)$ and $y^*_1(0,0)> y^*_2(0,0)$, we claim that $\theta^* \mapsto y^*_{\gamma}(0,\theta^*)$  is continuous on $[0,1]$. Then, by the intermediate value theorem, there exists  $\theta^*\in (0,1)$ satisfying
\begin{align*}
y^*_{1}(0,\theta^*)=y^*_{2}(0,\theta^*).
\end{align*}
Consequently, the corresponding $(\theta^*,u^*(k,\theta^*))$ is the desired solution.

In fact, recalling the definition of $y^*_{\gamma}(0,\theta^*)$, we could get
\begin{align*}
\begin{split}
|y^*_{\gamma}(0,\theta_1^*)&-y^*_{\gamma}(0,\theta_2^*)|\leq C(N,\Psi,\mathbb{G}_{\gamma},\mathbb{H}_{\gamma})\bigg(\mathbb{E}\Big[|x^*_{\gamma}(N,\theta_1^*)-x^*_{\gamma}(N,\theta_2^*)|\Big]\\
&+\Big( \mathbb{E}\Big[\sum_{k=0}^{N-1}|u^*(k,\theta_1^*)|^2\Big]^{\frac{1}{2}}+\mathbb{E}\Big[\sum_{k=0}^{N-1}|u^*(k,\theta_2^*)|^2\Big]^{\frac{1}{2}}\Big) \mathbb{E}\Big[\sum_{k=0}^{N-1}|u^*(k,\theta_1^*)-u^*(k,\theta_2^*)|^2\Big]^{\frac{1}{2}} \bigg).
\end{split}
\end{align*}
Moreover, recalling \eqref{A31} and proceeding identically as to derive \eqref{A3}, we obtain
\[
\mathbb{E}\left[|x^*_{\gamma}(N,\theta_1^*)-x^*_{\gamma}(N,\theta_2^*)|\right]\leq C(e,\mu,\beta,N,d) \mathbb{E}\left[\sum_{k =0}^{N-1}|u^*(k,\theta_1^*)-u^*(k,\theta_2^*)|\right].
\]
Thus, it suffices to show the continuity of $\theta^* \mapsto u^*(k,\theta^*)$.  It is easy to verify that $\mathbb{G}^{\theta^*}(k)$ is bounded and continuous with respect to $\theta^*$ and
$$
(\mathbb{G}^{\theta_1^*}(k))^{-1}-(\mathbb{G}^{\theta_2^*}(k))^{-1}=(\mathbb{G}^{\theta_1^*}(k))^{-1}\Big(\mathbb{G}^{\theta_2^*}(k)-\mathbb{G}^{\theta_1^*}(k)\Big)(\mathbb{G}^{\theta_2^*}(k))^{-1}.
$$
which together with $\mathbb{G}_{\gamma}(k)\gg 0$ indicates the boundedness and continuity of $\theta^* \mapsto (\mathbb{G}^{\theta^*}(k))^{-1}$.
By adding and subtracting terms, it follows from \eqref{A37} that for any $k \in \mathcal{T}$,
\begin{align*}
\begin{split}
&|u^*(k,\theta_1^*)-u^*(k,\theta_2^*)|\leq \Big((\mathbb{G}^{\theta_1^*}(k))^{-1}-(\mathbb{G}^{\theta_2^*}(k))^{-1}\Big)\mathbb{A}(k)\Theta_1^*\mathbb{H}\prod\limits_{i=k+1}^{N-1}\left(1+ e(i)\right)\\
&+(\mathbb{G}^{\theta_2^*}(k))^{-1}\mathbb{A}(k)\Big(\Theta_1^*-\Theta_2^*\Big)\mathbb{H}\prod\limits_{i=k+1}^{N-1}\left(1+ e(i)\right)\leq C(\mathbb{A},\mathbb{G},\mathbb{H})|\theta_1^*-\theta_2^*|,
\end{split}
\end{align*}
which implies the claim holds.
\end{proof}

\appendix
\renewcommand\thesection{\normalsize Appendix:  The proof of $\bar{y}^u_{\gamma}(0)\in C_b(\Gamma)$}
\section{ }

\renewcommand\thesection{A}
\begin{proof}[Proof of Lemma \ref{myw3002}]
By assumption \emph{(H2)} and estimations \eqref{A3} and \eqref{A7}, the boundedness is clear. Indeed, making full use of H{\"o}lder's inequality
\begin{align}\label{AAA1}
\begin{split}
|\bar{y}^u_{\gamma}(0)|
&\leq {\mathbb{E}} \bigg[\sum_{k=0}^{N-1}|\partial_u f^*_{\gamma}(k)||u(k)-u^{*}(k)|+\sum_{k=0}^{N-1}|\partial_x f^*_{\gamma}(k)||\bar{x}^{u}_{\gamma}(k)|+|\partial_x \phi^*_{\gamma}(N)||\bar{x}^{u}_{\gamma}(N)|\bigg]\\
&\leq C(N)\mathbb{E}\Big[\sum_{k =0}^{N-1}|\partial_x f^*_{\gamma}(k)|^2+|\partial_x \phi^*_{\gamma}(N)|^2\Big]^{\frac{1}{2}}\mathbb{E}\Big[\sum_{k =0}^{N}|\bar{x}^{u}_{\gamma}(k)|^2\Big]^{\frac{1}{2}}\\
&\ \ \ +C(N)\mathbb{E}\Big[\sum_{k =0}^{N-1}|\partial_u f^*_{\gamma}(k)|^2\Big]^{\frac{1}{2}}\mathbb{E}\Big[\sum_{k =0}^{N-1}(|u(k)|^2+|u^{*}(k)|^2)\Big]^{\frac{1}{2}}\\
&\leq C(L,d,N)\mathbb{E}\Big[1+\sum_{k =0}^{N-1}|u^{*}(k)|^2+\sum_{k =0}^{N}|{x}^*_{\gamma}(k)|^2\Big]^{\frac{1}{2}}\mathbb{E}\Big[\sum_{k =0}^{N-1}(|u(k)|^2+|u^{*}(k)|^2)\Big]^{\frac{1}{2}}\\
& \leq C(L,d,N,x_0)\Big(1+\mathbb{E}\Big[\sum_{k =0}^{N-1}(|u(k)|^2+|u^{*}(k)|^2)\Big]\Big)<\infty.
\end{split}
\end{align}

Then, the following proof of continuity is divided into two steps.\\
{\bf Step 1: ($\gamma\rightarrow\bar{x}_{\gamma}(k)$ is continuous for any $k \in \mathcal{T}^{\prime\prime}$)} Set $\bar{\alpha}(k)=\bar{x}_{\gamma}(k)-\bar{x}_{\gamma^{\prime}}(k)$. From \eqref{A6}, if we denote $
\widetilde{N}_{\gamma,\gamma^{\prime}}(s)=\Big(\partial_x b^*_{\gamma}(k)-\partial_x b^*_{\gamma^{\prime}}(k)\Big)\bar{x}_{\gamma}(k)+\Big(\partial_u b^*_{\gamma}(k)-\partial_u b^*_{\gamma^{\prime}}(k)\Big)\big(u(k)-{u}^*(k)\big)$, and ${{N}}_{\gamma,\gamma^{\prime}}(k)$ similarly for the diffusion term, then we have for any $k \in \mathcal{T}$
\begin{align}\label{AA1}
\begin{split}
\bar{\alpha}(k+1)&=\partial_x b^*_{\gamma^{\prime}}(k)\bar{\alpha}(k)+\widetilde{N}_{\gamma,\gamma^{\prime}}(k)+\sum_{i=1}^{d}\Big(\partial_x \sigma^{*,i}_{\gamma^{\prime}}(k)\bar{\alpha}(k)+{{N}}^i_{\gamma,\gamma^{\prime}}(k)\Big)B^i(k+1).
\end{split}
\end{align}
Using the derivation of \eqref{A3}, it suffices to prove
\begin{align}\label{AA2}
\lim\limits_{\epsilon\rightarrow 0}\sup\limits_{\widetilde{d}(\gamma,\gamma^{\prime})\leq \epsilon}\mathbb{E}\bigg[\sum_{k =0}^{N-1}\Big(|\widetilde{N}_{\gamma,\gamma^{\prime}}(k)|^2+\sum_{i=1}^{d}|{{N}}^i_{\gamma,\gamma^{\prime}}(k)|^2\Big)\bigg]=0
\end{align}
by making full use of assumptions (H3) and (H4). Indeed, taking the first part of $\widetilde{N}_{\gamma,\gamma^{\prime}}(k)$ for example, we note that
$$
\begin{aligned}
\begin{split}
&\mathbb{E}\bigg[\sum_{k =0}^{N-1} |\partial_x b^*_{\gamma}(k)-\partial_x b^*_{\gamma^{\prime}}(k)|^2|\bar{x}_{\gamma}(k)|^2\bigg]\leq 2\mathbb{E}\left[\sum_{k =0}^{N-1} |\partial_x b^*_{\gamma}(k)-\partial_x b_{\gamma^{\prime}}(k,{x}^*_{\gamma}(k),{u}^*(k))|^2|\bar{x}_{\gamma}(k)|^2\right]\\
&\ \ \ \ \ \ \ \ \ \ \ \  \ \ \ \ \ \ \ \ \ +2\mathbb{E}\left[\sum_{k =0}^{N-1} |\partial_xb_{\gamma^{\prime}}(k,{x}^*_{\gamma}(k),{u}^*(k))-\partial_x b^*_{\gamma^{\prime}}(k)|^2|\bar{x}_{\gamma}(k)|^2\right].
\end{split}
\end{aligned}
$$
Then, thanks to assumption (H4) and analyzing similarly as in Lemma \ref{myw201}, we could get
$$
\begin{aligned}
\lim\limits_{\epsilon\rightarrow 0}\sup\limits_{\widetilde{d}(\gamma,\gamma^{\prime})\leq \epsilon}\mathbb{E}\bigg[\sum_{k =0}^{N-1} |\partial_x b^*_{\gamma}(k)-\partial_x b_{\gamma^{\prime}}(k,{x}^*_{\gamma}(k),{u}^*(k))|^2|\bar{x}_{\gamma}(k)|^2\bigg]=0.
\end{aligned}
$$
As for the second part, it follows from (H3) that, for each $\epsilon>0$, we could find a $\xi>0$ such that if $|{x}^*_{\gamma}(k)-{x}^*_{\gamma^{\prime}}(k)|\leq \xi$, then $\Big|\partial_x b_{\gamma^{\prime}}({x}^*_{\gamma}(k),{u}^*(k))-\partial_x b^*_{\gamma^{\prime}}(k)\Big|\leq \epsilon$, that is
\begin{align}\label{AA3}
\Big|\partial_x b_{\gamma^{\prime}}(k,{x}^*_{\gamma}(k),{u}^*(k))-\partial_x b^*_{\gamma^{\prime}}(k)\Big|\leq \epsilon+C(L)I_{\{|{x}^*_{\gamma}(k)-{x}^*_{\gamma^{\prime}}(k)|\geq \xi\}}.
\end{align}
Thus, by the result of Lemma \ref{myw201} and the estimation \eqref{A7}, we derive that,
$$
\begin{aligned}
\begin{split}
&\lim\limits_{\epsilon\rightarrow 0}\sup\limits_{\widetilde{d}(\gamma,\gamma^{\prime})\leq \epsilon}\mathbb{E}\bigg[\sum_{k =0}^{N-1} |\partial_xb_{\gamma^{\prime}}(k,{x}^*_{\gamma}(k),{u}^*(k))-\partial_x b^*_{\gamma^{\prime}}(k)|^2|\bar{x}_{\gamma}(k)|^2\bigg]\\
&\ \ \ \ \leq  C(L,N,d,q)\lim\limits_{\epsilon\rightarrow 0}\bigg(|\epsilon|^2+\xi^{\frac{2-q}{q}}\sup\limits_{\widetilde{d}(\gamma,\gamma^{\prime})\leq \epsilon}{\mathbb{E}\Big[\sum_{k =0}^{N-1} |{x}^*_{\gamma}(k)-{x}^*_{\gamma^{\prime}}(k)|\Big]^{\frac{q-2}{q}}}\bigg)=0.
\end{split}
\end{aligned}
$$
Treating other terms in a same way, we could conclude that \eqref{AA2} holds.\\
{\bf Step 2: (The continuity of $\gamma\rightarrow\bar{y}^u_{\gamma}(0)$ )} By \eqref{A12}, it is clear that
\begin{align}\label{AA4}
\begin{split}
\bar{y}^u_{\gamma}(0)-\bar{y}^u_{\gamma^{\prime}}(0)
&=\mathbb{E}\bigg[\partial_x \phi^*_{\gamma^{\prime}}(N)\bar{\alpha}(N)+\sum_{k =0}^{N-1} \partial_x f^*_{\gamma^{\prime}}(k)\bar{\alpha}(k)+{{K}}_{\gamma,\gamma^{\prime}}\bigg],
\end{split}
\end{align}
where ${{K}}_{\gamma,\gamma^{\prime}}=\sum_{k =0}^{N-1} \Big[\Big(\partial_x f^*_{\gamma}(k)-\partial_x f^*_{\gamma^{\prime}}(k)\Big)\bar{x}_{\gamma}(k)+\Big(\partial_u f^*_{\gamma}(k)-\partial_u f^*_{\gamma^{\prime}}(k)\Big)\big(u(k)-{u}^*(k)\big)\Big]+\Big(\partial_x \phi^*_{\gamma}(N)-\partial_x \phi^*_{\gamma^{\prime}}(N)\Big)\bar{x}_{\gamma}(N)$. Observe that, for any $R>0$,
\begin{small}
\begin{align*}
\begin{split}
\Big|\partial_x f^*_{\gamma}(k)-\partial_x f_{\gamma^{\prime}}(k,{x}^*_{\gamma}(k),{u}^*(k))\Big|\leq \psi_R(\widetilde{d}(\gamma,\gamma^{\prime}))+ C(L) \big(1+|x_{\gamma}^*(k)|+|u^*(k)|\big)\big(I_{\{|x_{\gamma}^*(k)|\geq R\}}+I_{\{|u^*(k)|\geq R\}}\big).
\end{split}
\end{align*}
\end{small}
Furthermore, like \eqref{AA3}, we have
\begin{align*}
\begin{split}
\Big|\partial_x f_{\gamma^{\prime}}(k,{x}^*_{\gamma}(k),{u}^*(k))-\partial_x f^*_{\gamma^{\prime}}(k)\Big|\leq \epsilon+ C(L)\big(1+|x_{\gamma}^*(k)|+|u^*(k)|+|x_{\gamma^{\prime}}^*(k)|\big)I_{\{|{x}^*_{\gamma}(k)-{x}^*_{\gamma^{\prime}}(k)|\geq \xi\}}.
\end{split}
\end{align*}
Combining the above two inequalities shows that
\begin{align*}
\begin{split}
&\mathbb{E}\Big[\sum_{k =0}^{N-1} \Big|\Big(\partial_x f^*_{\gamma}(k)-\partial_x f^*_{\gamma^{\prime}}(k)\Big)\bar{x}_{\gamma}(k)\Big|\Big]\leq \Big(\psi_R(\widetilde{d}(\gamma,\gamma^{\prime}))+|\epsilon|\Big)\mathbb{E}\left[\sum_{k =0}^{N-1}|\bar{x}_{\gamma}(k)|\right] \\
&\ \ \ \ \ \ \ \ \ \ \ \ \ \ \ \ \ +C(L)\mathbb{E}\bigg[\sum_{k =0}^{N-1}\Big(1+|\bar{x}_{\gamma}(k)|^2+|{x}^*_{\gamma}(k)|^2+|{x}^*_{\gamma^{\prime}}(k)|^2+|u^*(k)|^2\Big)I_{R,\xi}\bigg],
\end{split}
\end{align*}
where $I_{R,\xi}=I_{\{|x_{\gamma}^*(k)|\geq R\}}+I_{\{|u^*(k)|\geq R\}}+I_{\{|{x}^*_{\gamma}(k)-{x}^*_{\gamma^{\prime}}(k)|\geq \xi\}}$.
Therefore, a similar argument as Lemma \ref{myw201} together with the estimations \eqref{A3} and \eqref{A7} gives
$$
\begin{aligned}
\begin{split}
\lim\limits_{\epsilon\rightarrow 0}\sup\limits_{\widetilde{d}(\gamma,\gamma^{\prime})\leq \epsilon}\mathbb{E}\Big[|{{K}}_{\gamma,\gamma^{\prime}}|\Big]&\leq  C(L,N,q,d,x_0)\lim\limits_{\epsilon\rightarrow 0}\bigg(|\epsilon|+\sup\limits_{\widetilde{d}(\gamma,\gamma^{\prime})\leq \epsilon}\psi_R(\widetilde{d}(\gamma,\gamma^{\prime}))\\
&\ \ \ +R^{\frac{2-q}{q}}+\xi^{\frac{2-q}{q}}\sup\limits_{\widetilde{d}(\gamma,\gamma^{\prime})\leq \epsilon}{\mathbb{E}\Big[\sum_{k =0}^{N}|{x}^*_{\gamma}(k)-{x}^*_{\gamma^{\prime}}(k)|\Big]^{\frac{q-2}{q}}}\bigg).
\end{split}
\end{aligned}
$$
Going back to \eqref{AA4}, with the help of H{\"o}lder's inequality, we conclude that
$$
\begin{aligned}
\begin{split}
|\bar{y}^u_{\gamma}(0)-\bar{y}^u_{\gamma^{\prime}}(0)| &\leq C(L,N) \bigg\{\mathbb{E}\Big[1+\sum_{k =0}^{N}|{x}^*_{\gamma^{\prime}}(k)|^2+\sum_{k =0}^{N-1}|u^*(k)|^2\Big]^{\frac{1}{2}}\mathbb{E}\Big[\sum_{k =0}^{N}|\bar{\alpha}(k)|^2\Big]^{\frac{1}{2}}+\mathbb{E}\Big[|{{K}}_{\gamma,\gamma^{\prime}}|\Big]\bigg\}.
\end{split}
\end{aligned}
$$
Hence, the required result comes immediately from the continuity of $\gamma\rightarrow\bar{x}_{\gamma}(k)$ proved in step 1.
\end{proof}

\end{document}